\theoremstyle{plain}
\newtheorem{thm}{Theorem}[section]
\newtheorem{lem}[thm]{Lemma}
\newtheorem{rem}[thm]{Remark}
\newtheorem{prop}[thm]{Proposition}
\newtheorem{cor}[thm]{Corollary}
\newtheorem{exmp}[thm]{Example}
\theoremstyle{definition}
\theoremstyle{remark}
\numberwithin{equation}{section}
\newcommand{\SL}{\operatorname{SL}}
\newcommand{\GL}{\operatorname{GL}}
\newcommand{\dist}{\operatorname{dist}}
\newcommand\di{Diophantine}
\newcommand{\R}{{\mathbb{R}}}
\newcommand{\Z}{{\mathbb{Z}}}
\newcommand{\N}{{\mathbb{N}}}
\newcommand{\D}{{\mathbb{D}}}
\newcommand\hd{Hausdorff dimension}
\newcommand{\uu}{{\bf u}}
\newcommand{\vr}{{\bf r}}
\newcommand{\ve}{{\bf e}}
\newcommand{\vp}{{\bf p}}
\newcommand{\vq}{{\bf q}}
\newcommand{\x}{{\bf x}}
\newcommand{\ignore}[1]{{}}
\newcommand\eq[2]{
\begin{equation}
\label{eq:#1}
{#2}
\end{equation}
}
\newcommand{\equ}[1]{\eqref{eq:#1}}
\title[Dirichlet-improvable pairs and arbitrary norms]{{Abundance  of Dirichlet-improvable pairs \\ with respect to arbitrary norms}}
\author{Dmitry Kleinbock and Anurag Rao}
\address{Brandeis University, Waltham MA
02454-9110 {\tt kleinboc@brandeis.edu}}
\address{Wesleyan University, Middletown CT 06459-0260
{\tt arao@wesleyan.edu}}
\begin{document}

\begin{abstract}
In the paper \cite{AS} of Akhunzhanov--Shatskov  the two-dimensional Dirichlet spectrum with respect to  Euclidean norm was defined. 
We consider an analogous definition for arbitrary norms on $\mathbb{R}^2$ and prove that, for each such norm, the set of Dirichlet improvable pairs {contains the set of badly approximable pairs, hence is hyperplane absolute winning.}
To prove this we make a careful study of some classical results in the geometry of numbers due to Chalk--Rogers and Mahler to establish a Haj\'{o}s--Minkowski type result for the critical locus of a cylinder.
{As a corollary, using a recent result of the first named author with Mirzadeh \cite{KM}}, we conclude that for any norm on $\mathbb{R}^2$ the top of the Dirichlet spectrum is not an isolated point.
\end{abstract}
\thanks{D.K.\ was  supported by  NSF grant  DMS-1900560.}
\date{October 14, 2021}

\subjclass[2010]{11J13; 11J83, 11H06, 37A17}
\maketitle
\section{Introduction}

For a pair of real numbers $\x = (x_1,x_2)$, the authors of \cite{AS} considered the quantity
\begin{equation}\label{as-spectrum}
    c_2(\x) := \limsup_{t\to \infty}\ t  \left(\min_{
    1\le q \le t} \dist_2(q\x, \Z^2)\right)^2
\end{equation}
where 
$\dist_2$ denotes the distance function with respect to the Euclidean norm. 
Equivalently, $c_2(\x)$ is the infimum of $c>0$ such that the system 
$$\begin{cases} \| q\x - \vp\|_2^2 &\le c/t\\ \qquad  |q|&\le t, \end{cases}$$
where $\|\cdot\|_2$ is the Euclidean norm on $\R^2$,
has a
nontrivial integer solution $(\vp,q)$ for all large enough $t>0$.

The main result  of \cite{AS} was a description of the so-called  {\textit{Dirichlet spectrum} with respect to the Euclidean norm on $\R^2$}: 
\begin{equation}\label{spectrum}
  \D_2:=  \left\lbrace c_2(\x) : \x \in \R^2 \right\rbrace =  \left[0,
    {2}/{\sqrt{3}}\right].
\end{equation}
Note that the  set-up described above is a Euclidean modification of the classical set-up of improving Dirichlet's Theorem initiated by Davenport and Schmidt  \cite{Davenport-Schmidt}.
There the distance $\dist_\infty$ with respect to the supremum norm  is used in place of $\dist_2$ in \eqref{as-spectrum}. It is worth pointing out that, to the best our knowledge, the complete structure of the supremum-norm analogue of \eqref{spectrum}, that is the set 
\begin{equation}\label{sup-spectrum}
  \D_\infty:=  \left\lbrace c_\infty(\x) : \x \in \R^2 \right\rbrace , \text{ where } c_\infty(\x) := \limsup_{t\to \infty}\ t\cdot \left(\min_{
    1\le q \le t} \dist_\infty(q\x, \Z^2)\right)^2,\end{equation}
 is not clear. What is known though is that $\D_\infty$ is a subset of $[0,1]$ containing both endpoints, that  $1$ is not an isolated point of $\D_\infty$, and that the set of \textit{Dirichlet-improvable} pairs
\begin{equation*}\label{sup-improvable}
\mathbf{DI}_{\infty} :=     \left\lbrace \x \in \R^2:  c_2(\x) < 
    1\right\rbrace
\end{equation*}
has measure zero but nevertheless it is quite big: namely \cite[Theorem 2]{Davenport-Schmidt}  it contains the set of \textit{badly approximable} pairs
\begin{equation*}\label{BA}
\mathbf{BA} :=     \left\lbrace \x \in \R^2:  \liminf_{t\to \infty}\  t  \left(\min_{
    1\le q \le t} \dist (q\x, \Z^2)\right)^2 > 0\right\rbrace.
\end{equation*}
The latter set is known to be of full \hd; this was proved in \cite{S} by showing that $\mathbf{BA}$ is a winning set of Schmidt's game, and later upgraded to an even stronger hyperplane absolute winning property, see \cite{BFKRW}.

In this paper we show that $\mathbf{BA}$ is also a subset of the set of \textit{Euclidean Dirichlet-improvable} pairs
\begin{equation*}\label{euclidean-improvable}
\mathbf{DI}_{2} :=     \left\lbrace \x \in \R^2:  c_2(\x) < 
    {2}/{\sqrt{3}}\right\rbrace.
\end{equation*}
Moreover, the same is true if the Euclidean norm is replaced by any norm. Namely, for a norm  $\nu$ on $\R^2$ and $\x \in \R^2$ let us define
\begin{equation}\label{spectrum-definition}
    c_\nu(\x) = \limsup_{t\to \infty}\ t\cdot \left( \min_{
    1\le q \le t
    } 
    \dist_\nu (q\x, \Z^2)\right)^2 \text{ and } c_\nu :=\sup_{\x\in\R^2}
    c_\nu(\x),
\end{equation}
where 
$\dist_\nu$ is the distance function induced by $\nu$.
Our main result is as follows.
\begin{thm}\label{main-theorem}
For any  norm  $\nu$ on $\R^2$, the set
\begin{equation*}\label{main-theorem-equation}
\mathbf{DI}_{\nu} :=    \left\lbrace \x \in \R^2 : c_\nu(\x) < c_\nu \right\rbrace 
\end{equation*}
contains $\mathbf{BA}$ (hence is hyperplane absolute winning).
\end{thm}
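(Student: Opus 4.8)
The plan is to connect the Dirichlet spectrum quantity $c_\nu(\x)$ with the dynamics of the diagonal flow on the space of unimodular lattices in $\R^3$, and then to exploit the known winning property of $\BA$ under that correspondence. Concretely, to each pair $\x\in\R^2$ associate the lattice $\Lambda_\x = u_\x\Z^3$, where $u_\x$ is the unipotent matrix with $\x$ in the appropriate column, and let $g_t = \operatorname{diag}(e^{t},e^{t},e^{-2t})$. A standard Dani-type correspondence (going back to Dani's work and used in this exact planar form in \cite{AS}) says that $c_\nu(\x)$ equals a $\limsup$ over $t$ of a quantity measuring how deep into the cusp the trajectory $g_t\Lambda_\x$ goes, measured with respect to the norm $\nu$ on the first two coordinates and the absolute value on the third. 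More precisely, $c_\nu(\x)<c_\nu$ should be equivalent to the statement that the orbit $\{g_t\Lambda_\x\}$ eventually avoids a certain neighborhood of a \emph{critical locus} — the set of lattices that are extremal for the ``cylinder'' $Z_\nu = \{(\vv,z): \nu(\vv)\le 1,\ |z|\le 1\}$. This is exactly where the geometry-of-numbers input advertised in the abstract (the Chalk--Rogers/Mahler analysis and the Haj\'os--Minkowski type theorem for the critical locus of a cylinder) enters: one needs to know that $c_\nu = 1/\Delta(Z_\nu)$ where $\Delta(Z_\nu)$ is the critical determinant of the cylinder, and that the critical locus is a nice, explicitly describable submanifold.

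Granting that structural result, the proof of Theorem \ref{main-theorem} proceeds as follows. First I would establish the equivalence: $\x\in\mathbf{DI}_\nu$ iff there exist $\vre>0$ and $T>0$ such that for all $t\ge T$, the lattice $g_t\Lambda_\x$ has no nonzero point in $(1-\vre)$ times a fixed critical copy of $Z_\nu$, i.e.\ the trajectory stays $\vre$-away from the critical locus in the relevant sense. Second, I would recall from \cite{Davenport-Schmidt} (their Theorem 2, quoted in the excerpt, and the dynamical reformulation) the key geometric fact that a badly approximable $\x$ gives a \emph{bounded} $g_t$-orbit, and more importantly that bounded orbits avoid any prescribed compact ``degenerate'' set: the relevant point is that the set of lattices realizing the critical determinant of the cylinder consists of lattices containing a short vector in the $z$-axis direction or lying in a coordinate subspace, and a bounded orbit of $g_t\Lambda_\x$ for $\x\in\BA$ is uniformly bounded away from such configurations. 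Concretely, $\x\in\BA$ means $\inf_{t}\lambda_1(g_t\Lambda_\x)>0$ in the coordinate giving $\min_q \dist(q\x,\Z^2)$, and one translates this into a uniform gap between $g_t\Lambda_\x$ and the critical locus of $Z_\nu$, which yields $c_\nu(\x)<c_\nu$.

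The main obstacle, as I see it, is not the dynamical translation — that is by now routine — but rather the geometry-of-numbers core: showing that the extremal lattices for an \emph{arbitrary} norm cylinder $Z_\nu$ form a set that is ``transverse'' to $\BA$-orbits in the precise quantitative sense needed, i.e.\ that every critical lattice of $Z_\nu$ contains a nonzero vector in the span of $e_3$ (or otherwise is forced into a degenerate position under $g_t$). For the Euclidean norm this is the classical Chalk--Rogers / Mahler description (the optimal cylinder packing uses a lattice with a vector along the axis), but for a general norm one must redo this analysis, and the non-strict convexity or lack of symmetry of $\nu$ could in principle allow exotic critical lattices. I would handle this by a compactness-and-continuity argument in the space of norms combined with Mahler's theory of the critical determinant as a function on star bodies, isolating the statement that any critical lattice of $Z_\nu$ meets the $z$-axis; this Haj\'os--Minkowski type rigidity is the linchpin, and once it is in hand the inclusion $\BA\subseteq\mathbf{DI}_\nu$ follows, and the parenthetical ``hyperplane absolute winning'' conclusion is then immediate from \cite{BFKRW} since any set containing $\BA$ inherits\footnote{More precisely: $\BA$ itself is hyperplane absolute winning, and we are asserting $\mathbf{DI}_\nu\supseteq\BA$; the HAW property of the smaller set $\BA$ does not automatically pass to the larger set, so in fact what is being claimed is that $\mathbf{DI}_\nu$, containing the HAW set $\BA$, is in particular nonempty of full Hausdorff dimension — or, if a genuine HAW statement for $\mathbf{DI}_\nu$ is intended, it would follow from showing $\mathbf{DI}_\nu$ itself is HAW, which again reduces to the same orbit-avoidance property.} the relevant largeness.
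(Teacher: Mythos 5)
Your outline follows essentially the same route as the paper: reduce $\mathbf{DI}_\nu$ to eventual avoidance, under the flow $a_s$, of a neighborhood of the critical locus $\mathcal{L}_\eta$ of the cylinder $B_\nu(1)\times[-1,1]$; identify $\BA$ with bounded $a_s$-orbits; and supply the geometry-of-numbers input via a Haj\'os--Minkowski type description of $\mathcal{L}_\eta$. However, there is a genuine gap in what you declare to be ``the linchpin.'' You assert that every critical lattice of $Z_\nu$ meets the $z$-axis. That is false, and this is precisely where the work of the paper lies. The structure theorem (the paper's Theorem \ref{critical-pieces}) shows that $\mathcal{L}(C_B)$ is a union of \emph{two} pieces: one consisting of lattices with a nonzero vector on the $x_3$-axis, the other consisting of lattices with a nonzero vector (in fact a rank-two sublattice) in the $(x_1,x_2)$-plane, i.e.\ $\mathcal{L}_\eta\subset\mathcal{Z}_+\cup\mathcal{Z}_-$ in the paper's notation. (Your earlier phrase ``or lying in a coordinate subspace'' gestures at this but is not the right statement, since those lattices are full-rank.) Collapsing to only the $z$-axis case makes the subsequent dynamical step look trivial, but it is not.

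This matters because the final argument is genuinely two-sided. If $a_{s_k}\Lambda_\x\to\Lambda\in\mathcal{Z}_+$, then the forward $a_s$-orbit of $\Lambda$ diverges (the $x_3$-axis is contracted for $s>0$), contradicting boundedness of the forward orbit of $\Lambda_\x$ by a straightforward continuity argument. But if $\Lambda\in\mathcal{Z}_-$, the $(x_1,x_2)$-plane is \emph{expanded} by $a_s$ for $s>0$, so the forward orbit of $\Lambda$ does \emph{not} diverge; one must instead observe that the \emph{backward} orbit of $\Lambda$ diverges, choose $s<0$ with $a_s\mathcal{V}\cap\mathcal{K}=\varnothing$ for a neighborhood $\mathcal{V}$ of $\Lambda$, and then use that $s+s_k$ is eventually positive. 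Your phrase ``otherwise is forced into a degenerate position under $g_t$'' does not capture this; the $\mathcal{Z}_-$ lattices are not degenerate under the forward flow, and the argument needs the backward flow plus a shift by $s_k$. Finally, a small side note: your footnote's worry is unfounded, since HAW \emph{is} inherited by supersets (Alice's winning strategy for the smaller target is automatically winning for any larger one), so $\BA\subset\mathbf{DI}_\nu$ does immediately give that $\mathbf{DI}_\nu$ is HAW.
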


Note that it has already been proved in \cite{KR} that for any norm $\nu$ the Lebesgue measure of $\mathbf{DI}_{\nu}$ is zero.
\smallskip

\ignore{\begin{rem} \rm
For the definition of the hyperplane absolute winning 
property, see \cite[\S2]{BFKRW} or \cite[\S2.1]{AGK}.
HAW implies winning in the sense of Schmidt
\cite{S}, and this in turn implies thickness.
{The class of} HAW sets, like those which are winning, {is} closed under countable intersections.
\end{rem}

The measure zero fact has already been established in \cite{KR}, so the actual claim of the theorem is that the set is 
HAW.
Further, this theorem is known to hold for the supremum norm $\|\cdot\|_\infty$: 
it is a theorem of Davenport and Schmidt \cite[Theorem 2]{Davenport-Schmidt} that the set of \textit{badly approximable} $\x$ is contained in 
$\mathbf{DI}_{\infty}$, and 
it is known \cite[Theorem 2.5]{BFKRW} that badly approximable vectors form an HAW set.

\smallskip}


{Theorem \ref{main-theorem} can be used to derive a corollary concerning
 the  
 \textit{Dirichlet spectrum} with respect to an arbitrary norm $\nu$ on $\R^2$, defined similarly to \eqref{spectrum} and  \eqref{sup-spectrum} as
$$\D_\nu := \left\{c_\nu(\x) : \x\in\R^2\right\} . $$ It is 
a 
subset of $[0,c_\nu]$ containing both of the endpoints. 
\begin{cor}\label{cor-spectrum} $c_\nu$ is an accumulation point of $\D_\nu $. 
\end{cor}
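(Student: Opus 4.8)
The plan is to obtain Corollary~\ref{cor-spectrum} as a short consequence of Theorem~\ref{main-theorem} together with the dynamical criterion of \cite{KM}. The first step is the standard translation of the norm-$\nu$ Dirichlet problem into homogeneous dynamics on the space $X_3$ of unimodular lattices in $\R^3$: one has a one-parameter diagonal flow $g_t$ on $X_3$, its $2$-dimensional unstable horospherical subgroup $\{u_\x:\x\in\R^2\}$ (so that $\mathcal L:=\{u_\x\Z^3:\x\in\R^2\}$ is an unstable leaf, identified with $\R^2$ via $\x\mapsto u_\x\Z^3$), and a bounded function $F_\nu\colon X_3\to[0,\infty)$ — measuring, after the appropriate rescaling, how short a nonzero vector of a lattice can be made inside the $\nu$-cylinder $\{|y_0|\le 1,\ \nu(y_1,y_2)\le 1\}$ — such that
\[
  c_\nu(\x)\;=\;\limsup_{t\to\infty}F_\nu\bigl(g_t\,u_\x\Z^3\bigr)\qquad(\x\in\R^2),
\]
and $\sup_{X_3}F_\nu=c_\nu$. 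What makes the triple $(X_3,g_t,F_\nu)$ fit the set-up of \cite{KM} is that this supremum is \emph{attained}, on a compact set of lattices — the critical locus of the cylinder; pinning down that locus is precisely the purpose of the analysis (following Chalk--Rogers and Mahler, in the spirit of Haj\'{o}s--Minkowski) carried out in the body of the paper.

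Granting this dictionary, \cite{KM} supplies a criterion under which the top $c_\nu=\sup_\x c_\nu(\x)$ of $\D_\nu$ fails to be an isolated point of $\D_\nu$; its hypothesis is a largeness property, inside the leaf $\mathcal L\cong\R^2$, of the improvable locus $\mathbf{DI}_\nu=\{\x:c_\nu(\x)<c_\nu\}$ — that it be nonempty, or, in the form that is actually needed, winning for Schmidt's game on $\mathcal L$. The mechanism is as follows: fixing an improvable $\x_0$, so that $\limsup_t F_\nu(g_t u_{\x_0}\Z^3)=c_0<c_\nu$, and a lattice $z$ in the critical locus $F_\nu^{-1}(c_\nu)$, one uses recurrence to the critical locus together with the hyperbolicity of $g_t$ to construct — while remaining inside $\mathcal L$ — points $\x_k$ whose forward orbits track that of $u_{\x_0}\Z^3$ for long stretches and then make excursions of prescribed heights near $z$, arranged so that $c_\nu(\x_k)<c_\nu$ while $c_\nu(\x_k)\nearrow c_\nu$. (Equivalently, in contrapositive form: if $c_\nu$ were isolated in $\D_\nu$ then $\mathbf{DI}_\nu$ would coincide with a uniformly $\nu$-Dirichlet-improvable set $\{\x:c_\nu(\x)\le c_\nu-\varepsilon\}$ for some $\varepsilon>0$, which \cite{KM} shows is too thin to be winning, let alone to contain $\BA$.)

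It then remains only to verify the hypothesis, and this is exactly what Theorem~\ref{main-theorem} delivers: $\BA\subseteq\mathbf{DI}_\nu$, so $\mathbf{DI}_\nu$ is nonempty and in fact hyperplane absolute winning, hence winning and of full \hd\ inside $\mathcal L$. Feeding this into \cite{KM} produces $\x_k\in\mathbf{DI}_\nu$ with $c_\nu(\x_k)\to c_\nu$, that is $c_\nu\in\overline{\D_\nu\setminus\{c_\nu\}}$, which is the assertion of the corollary.

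Assuming \cite{KM} and Theorem~\ref{main-theorem}, the only step that demands real work — and hence the main obstacle for the argument above — is the first one: checking that the norm-$\nu$ set-up satisfies the standing hypotheses of \cite{KM}. One needs the mild regularity of $F_\nu$ (upper semicontinuity is enough), but, more importantly, one needs to know that $c_\nu=\sup F_\nu$ is genuinely attained — not merely approached in the cusp — together with a workable description of the critical locus on which it is attained. That is not a formality: it is exactly the Haj\'{o}s--Minkowski-type study of the critical lattices of the cylinder carried out earlier in the paper, and once it is available Corollary~\ref{cor-spectrum} follows at once.
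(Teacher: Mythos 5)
Your contrapositive formulation is exactly the paper's argument, and the corollary is indeed a short consequence of Theorem~\ref{main-theorem} together with \cite{KM}. But the way you frame the two ingredients has the division of labor somewhat scrambled, and it is worth untangling.

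You describe \cite{KM} as supplying ``a criterion under which the top of $\D_\nu$ fails to be isolated'' whose hypothesis is a largeness property of $\mathbf{DI}_\nu$, and you sketch a ``mechanism'' of explicitly producing $\x_k$ with $c_\nu(\x_k)\nearrow c_\nu$ by recurrence and hyperbolicity. That is not what is used. The result invoked, \cite[Theorem~1.2]{KM}, is an \emph{unconditional} dimension-drop statement: for the diagonal $a_s$-action on $X_3$ and any subset $\mathcal{K}$ with non-empty interior, the set of $\x$ whose forward orbit $a_s u_\x\Z^3$ \emph{eventually avoids} $\mathcal{K}$ has Hausdorff dimension strictly less than $2$. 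It takes nothing about $\mathbf{DI}_\nu$ as a hypothesis, and it produces no approximating sequence; it is used purely by contradiction. The argument runs: if $c_\nu$ were isolated, there would be $c<c_\nu$ with $c_\nu(\x)<c$ for all $\x\in\mathbf{DI}_\nu$; by Proposition~\ref{dynamical} this places $\mathbf{DI}_\nu$ inside the set of $\x$ whose orbit eventually avoids $\mathcal{K}_\eta(r)$ for some $r<r_\eta$; since $\mathcal{K}_\eta(r)$ has non-empty interior by \eqref{eq:nonempty}, \cite{KM} forces that containing set to have Hausdorff dimension $<2$, contradicting $\mathbf{BA}\subset\mathbf{DI}_\nu$ from Theorem~\ref{main-theorem} (as $\mathbf{BA}$ has full dimension). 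So Theorem~\ref{main-theorem} supplies the lower bound on $\mathbf{DI}_\nu$ and \cite{KM} supplies the upper bound on the uniformly-improvable set; neither one alone is a ``criterion'' for non-isolation.

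Related to this, your closing paragraph misplaces where the ``real work'' sits. To apply \cite{KM} in the corollary one does \emph{not} need the critical locus description of Sections~3--4, nor even that $\sup F_\nu$ is attained; one only needs that $\mathcal{K}_\eta(r)$ has non-empty interior for $r<r_\eta$, which is immediate from the definition of the critical radius. The Haj\'os--Minkowski-type analysis of the critical locus is the real work behind \emph{Theorem~\ref{main-theorem}} (i.e.\ behind $\mathbf{BA}\subset\mathbf{DI}_\nu$), not behind the corollary's application of \cite{KM}. With that bookkeeping corrected, your contrapositive argument is the paper's proof.
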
}

The structure of this paper is as follows.
In the subsequent section we show that the set 
$\mathbf{DI}_{\nu}$ can be identified with a set of three-dimensional unimodular lattices having a certain dynamical property.
This property is that of avoiding a certain compact subset $\mathcal{L}_\eta$ of the space of all unimodular lattices in $\R^3$ under a diagonal flow. More precisely, $\mathcal{L}_\eta$ is  the \textit{critical locus} (see \S\ref{lattices}) of the norm $\eta$ on $\R^3$ given by 
\eq{norm}{  \eta(x_1,x_2,x_3) := \max\{\nu(x_1,x_2), |x_3|\}.}  
The next {two sections are} then justifiably spent on a detailed study of the work \cite{CR, CR-cor} of Chalk--Rogers in order to establish a structure theorem for these critical loci.
The resulting structure allows us to 
apply the argument implicitly contained in the paper of Davenport and Schmidt
to conclude the proof of Theorem \ref{main-theorem} in \S\ref{completion}. 
Corollary \ref{cor-spectrum} is then derived from Theorem \ref{main-theorem} with the help of a recent result---a solution of a special case of the Dimension Drop Conjecture---due to the first-named author and Mirzadeh \cite{KM}. Several  open questions are mentioned at the very end of the paper.

\subsection*{Acknowledgements}
We wish to thank Jinpeng An  for making an observation that helped us strengthen  our main result, and Nikolay Moshchevitin  for bringing the work \cite{AS} to our attention and for asking a question that led to Corollary~\ref{cor-spectrum}.

\section{{A} dynamical restatement}\label{lattices}

As promised, we describe how Dirichlet-improvability is related to dynamics and how the constant $c_\nu$ of equation \eqref{spectrum-definition} is related to critical lattices.
This is a special case of the uniform Diophantine approximation problem discussed in \cite{KR}.
{For  $n\in \N$ and a norm $\eta$ in $\R^n$, let $\Delta_\eta$ be the infimum covolume over all lattices in $\R^n$ intersecting the unit norm ball $B_\eta(1)\subset \R^n$ trivially.}
This is called the \textit{critical determinant} of the norm $\eta$.
The constant $\Delta_\eta$ is positive and is actually attained by a lattice.
These facts follow from the Minkowski convex body theorem and Mahler's compactness criterion respectively. 

For us it will be convenient to work with 
\textit{unimodular} (covolume one) lattices; we will denote by  {$X_n$ the space of all such lattices  {in $\R^n$}. Clearly $X_n$ can be 
identified with the homogeneous space $\SL_n(\R)/\SL_n(\Z)$ via $g\mapsto g\Z^n$.
Then one can define the \textit{critical radius} of the norm $\eta$ as follows:
 $$
 r_\eta := \sup\left\{r:
 \Lambda\cap B_\eta(r) = \{0\}
  \text{ for some }\Lambda\in X_n
  \right\}. $$
It is clear that  we have $ r_\eta = \Delta_\eta^{-1/n}$.}

Moreover, for $r > 0$ let us define
\begin{equation*}
    \mathcal{K}_\eta(r) := \big\lbrace\Lambda \in {X_n} : \Lambda \cap B_\eta\left(
    {r}
    \right)  = \{0\}\big\rbrace.
\end{equation*}
Obviously we have the 
containment
\begin{equation*}
    r_1 < r_2 \implies \mathcal{K}_\eta(r_1) \supset \mathcal{K}_\eta(r_2),
\end{equation*}
and it follows from the definition of $
 r_\eta$ that \eq{empty}{
    r_\eta < r \implies \mathcal{K}_\eta(r) = \varnothing,
}  and  \eq{nonempty}{
  r<r_\eta \implies \mathcal{K}_\eta(r) 
    \text{ has non-empty interior}.
}

Thus, 
$\{\mathcal{K}_\eta(r): 0<r<r_\eta\}$ is a family of compact neighborhoods of the set $$\mathcal{L}_\eta:=\mathcal{K}_\eta(r_\eta),$$ which is called the \textit{critical locus} of $\eta$.

\smallskip

{Now let us specialize to $n=3$.} For $\x\in\R^2$ let us define
\begin{equation}\label{uA}
    \Lambda_\x := u_\x\cdot\Z^3\in {X_3}, \text{ where } u_\x := \left[ {\begin{array}{ccc}
   1 & 0 & x_1 \\
   0 & 1 & x_2 \\
   0 & 0 & 1
  \end{array} } \right] \in \SL_3(\R).
\end{equation}
The 
flow of interest here is given by the one-parameter subgroup
\begin{equation}
    a_s:=\left[ {\begin{array}{ccc}
   e^{s/2} & 0 & 0 \\
   0 & e^{s/2} & 0 \\
   0 & 0 & e^{-s}
  \end{array} } \right].
\end{equation}

Recall that we started with a norm $\nu$ on $\R^2$, and let us take $\eta$ of the form \equ{norm}. 
Then a standard argument usually referred to as the \textit{Dani correspondence} gives the next proposition. 
It is a special case of a general correspondence described in \cite[Proposition 2.1]{KR}; we include the elementary proof for the sake of keeping the paper self-contained.

\begin{prop}\label{dynamical}
For  $\nu$ and $\eta$ related via \equ{norm} and $c  > 0$ the following are equivalent:
\begin{itemize}
\item[{\rm (i)}]
 $c_\nu(\x) < c  $;
 \item[{\rm (ii)}] there is an $r < c^{1/3}$  such that for all sufficiently large $s$, $a_s\Lambda_\x \notin  \mathcal{K}_\eta(r)$.
 \end{itemize}
 \end{prop}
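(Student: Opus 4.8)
The plan is to unwind both conditions into the same statement about existence of integer solutions $(\vp,q)$ to a Diophantine inequality, using the explicit form of $u_\x$, $a_s$ and $\eta$. First I would translate condition (ii): a lattice $a_s\Lambda_\x$ fails to lie in $\mathcal K_\eta(r)$ precisely when it contains a nonzero vector in the ball $B_\eta(r)$, i.e. there is $(\vp,q)\in\Z^3\smallsetminus\{0\}$ with $\eta\big(a_s u_\x (q,-\vp)^{\mathrm{tr}}\big)\le r$ (the sign of $\vp$ is cosmetic). A direct computation gives $a_s u_\x (q,-\vp)^{\mathrm{tr}} = \big(e^{s/2}(q x_1 - p_1),\, e^{s/2}(q x_2 - p_2),\, e^{-s}q\big)$, so by the definition \equ{norm} of $\eta$ this vector lies in $B_\eta(r)$ iff
\[
\nu(q\x-\vp)\le e^{-s/2} r \quad\text{and}\quad |q|\le e^{s} r .
\]
Thus $a_s\Lambda_\x\notin\mathcal K_\eta(r)$ for all large $s$ is equivalent to: for all large $s$ there is a nonzero $(\vp,q)$ satisfying these two inequalities.

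Next I would match this against condition (i). Rewriting \equ{spectrum-definition}, $c_\nu(\x)<c$ means there is $c'<c$ such that for all large $t$ the system $\nu(q\x-\vp)^2\le c'/t$, $1\le q\le t$ has a nonzero integer solution; writing $c'=r^3$ with $r<c^{1/3}$ and substituting $t=e^s r$ converts $\nu(q\x-\vp)^2\le c'/t = r^3/(e^s r) = e^{-s}r^2$ into $\nu(q\x-\vp)\le e^{-s/2}r$, and $1\le q\le t$ into $1\le q\le e^s r$, which is exactly the pair of inequalities above (the harmless replacement of $q\ge 1$ by $q\ne 0$, together with $(\vp,q)\mapsto(-\vp,-q)$, handles negative $q$, and $q=0$ forces $\vp=0$ since $\nu(\vp)\le e^{-s/2}r<1$ eventually, so the trivial solution is automatically excluded). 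Running the substitution $t\leftrightarrow e^s r$ in both directions, and noting that "for all large $t$" and "for all large $s$" correspond under this monotone reparametrization, gives the equivalence for a fixed $r$, and then quantifying over $r<c^{1/3}$ closes the loop.

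The only genuinely delicate point — the one I'd treat carefully rather than wave at — is the interchange of the roles of the $\limsup$/"for all large" quantifier with the strict inequality: $c_\nu(\x)<c$ is not literally "there exists $r<c^{1/3}$ with the displayed solvability for all large $s$" unless one is careful that a strict inequality $c_\nu(\x)<c$ yields \emph{some} intermediate $c'=r^3\in(c_\nu(\x),c)$ for which solvability holds eventually, and conversely that eventual solvability at level $r$ gives $c_\nu(\x)\le r^3<c$. This is exactly where the cube root $r<c^{1/3}$ (rather than $r\le c^{1/3}$) is forced, and it is the natural place for an off-by-an-$\varepsilon$ slip; everything else is the bookkeeping of the coordinate computation above.
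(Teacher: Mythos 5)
Your argument is correct and is essentially the same Dani-correspondence computation the paper uses: both proofs unwind $a_s\Lambda_\x\notin\mathcal K_\eta(r)$ into solvability of the system $\nu(q\x-\vp)\le e^{-s/2}r$, $|q|\le e^s r$, pass between $t$ and $s$ via the monotone change of variable $t=e^s\cdot(\text{const})$, and close the gap between the strict inequality $c_\nu(\x)<c$ and the radius $r$ by inserting an intermediate level (your $c'=r^3$, the paper's $d$ with $d<r^3<c$). The delicate point you flag at the end is exactly the one the paper handles by the same intermediate-constant device, so there is no substantive difference.
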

 \begin{proof}
 Say $\rm (i)$ holds. 
 Then for any $c_\nu(\x) < d < c$, the system 
 $$\begin{cases} \nu( q\x - \vp) &\leq (d/t)^{1/2}\\ \qquad  |q|&\le t \end{cases}$$
 has a nontrivial integer solution $(\vp, q)$ for all sufficiently large $t$.
 Equivalently, for sufficiently large $t$,
 $$
 \Lambda_\x \cap \left[ {\begin{array}{ccc}
   (d/t)^{1/2} & 0 & 0 \\
   0 & (d/t)^{1/2} & 0 \\
   0 & 0 & t
  \end{array} } \right] \cdot \overline{B_\eta(1)} \neq \{0\}.
 $$
 By substituting $\ln\frac{t}{d^{1/3}}$ for $s$, we see that for all sufficiently large $s$,
 $$
 a_s\Lambda_\x \cap   \overline{B_\eta(d^{1/3})} \neq \{0\}.
 $$
 Choosing $d< r^3 < c$ we see that $\rm (ii)$ is true.
 
 Now say that $\rm (ii)$ holds.
 By defining $t$ via the equation $\ln \frac{t}{r} = s$, we may trace backwards in the implications above to see that the system 
  $$\begin{cases} \nu( q\x - \vp) & < (r^3/t)^{1/2}\\ \qquad  |q|& <\, t \end{cases}$$
  has a nontrivial solution for all sufficiently large $t$.
  Thus we have that $c_\nu(\x) \leq r^3 < c$, and hence $\rm (i)$ holds.
 \end{proof}
 
In view of \equ{empty} we have $c_\nu(\x) \le r_\eta^3$ for all $\x$. Moreover, an application of the above proposition with $c = r_\eta^3$, in view of \equ{nonempty} and the ergodicity of the $a_s$-action on ${X_3}$, shows that $c_\nu(\x)  = r_\eta^3$ for Lebesgue almost all $\x$. 
See \cite[Proposition 3.2]{KR} for additional details.
It follows that $c_\nu = r_\eta^3 = \Delta_\eta^{-1}$. {This last fact and another application of  Proposition \ref{dynamical} show that}

\begin{cor}\label{miss-locus}
$\x \in\mathbf{DI}_{\nu} $ 
if and only if the forward $a_s$-orbit of $\Lambda_\x$ eventually avoids an open neighborhood of $\mathcal{L}_\eta$; that is,
\eq{DI}
     {\overline{\left\lbrace a_s\Lambda_\x : s\ge s_0 \right\rbrace} \cap \mathcal{L}_\eta = \varnothing\text{ for some } s_0 >0.}
\end{cor}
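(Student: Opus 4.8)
The plan is to read the corollary off from Proposition~\ref{dynamical} applied with $c=c_\nu$, using only soft topological input about the critical locus. Recall the identification $c_\nu=r_\eta^3$ established just above. With $c=c_\nu$, Proposition~\ref{dynamical} says exactly that $\x\in\mathbf{DI}_{\nu}$ (i.e.\ $c_\nu(\x)<c_\nu$) if and only if there is some $r<r_\eta$ such that $a_s\Lambda_\x\notin\mathcal{K}_\eta(r)$ for all sufficiently large $s$. So it suffices to show that this orbit condition is equivalent to \equ{DI}.

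It is convenient to work with the function $\delta\colon X_3\to\R_{>0}$ sending a lattice to the $\eta$-length of its shortest nonzero vector, $\delta(\Lambda):=\min\{\eta(v):v\in\Lambda\sm\{0\}\}$. This $\delta$ is continuous (a standard consequence of the discreteness of lattices), one has $\delta\le r_\eta$ everywhere by the definition of the critical radius, $\mathcal{K}_\eta(r)=\delta^{-1}([r,\infty))$, and $\mathcal{L}_\eta=\delta^{-1}(\{r_\eta\})$. Two facts will be used. First, $\mathcal{L}_\eta$ lies in the interior of $\mathcal{K}_\eta(r)$ for every $r<r_\eta$ (this is the assertion, already noted above, that the $\mathcal{K}_\eta(r)$ are compact \emph{neighborhoods} of $\mathcal{L}_\eta$): indeed $\delta^{-1}((r,\infty))$ is open, is contained in $\mathcal{K}_\eta(r)$, and contains $\mathcal{L}_\eta$ since $r_\eta>r$. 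Second, each $\mathcal{K}_\eta(r)$ with $r>0$ is compact, by Mahler's criterion.

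For the forward implication, suppose $a_s\Lambda_\x\notin\mathcal{K}_\eta(r)$ for all $s\ge s_0$ with some $r<r_\eta$. Then the orbit tail $\{a_s\Lambda_\x:s\ge s_0\}$ lies in the open set $X_3\sm\mathcal{K}_\eta(r)$, so its closure is contained in $X_3\sm\operatorname{int}\mathcal{K}_\eta(r)$, which is disjoint from $\mathcal{L}_\eta$ by the first fact above; this is precisely \equ{DI}, with $\operatorname{int}\mathcal{K}_\eta(r)$ as the required open neighborhood of $\mathcal{L}_\eta$. For the converse, assume \equ{DI} and suppose, towards a contradiction, that $\limsup_{s\to\infty}\delta(a_s\Lambda_\x)=r_\eta$ (this $\limsup$ cannot exceed $r_\eta$). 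Pick $s_k\to\infty$ with $\delta(a_{s_k}\Lambda_\x)\to r_\eta$; then the lattices $a_{s_k}\Lambda_\x$ eventually lie in the compact set $\mathcal{K}_\eta(r_\eta/2)$, so along a subsequence they converge to some $\Lambda_\infty$. Continuity of $\delta$ forces $\delta(\Lambda_\infty)=r_\eta$, i.e.\ $\Lambda_\infty\in\mathcal{L}_\eta$; but $\Lambda_\infty$ also lies in the closure of $\{a_s\Lambda_\x:s\ge s_0\}$ (as $s_k\ge s_0$ eventually), contradicting \equ{DI}. Hence $\limsup_{s\to\infty}\delta(a_s\Lambda_\x)<r_\eta$, and choosing any $r$ strictly between this $\limsup$ and $r_\eta$ yields $a_s\Lambda_\x\notin\mathcal{K}_\eta(r)$ for all large $s$, as required.

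The argument is entirely soft: all of the substantive content is already packaged into Proposition~\ref{dynamical} and into the preceding identification $c_\nu=r_\eta^3$. The one step that calls for a little care is the compactness argument in the converse, where one needs not merely that the orbit keeps returning to a fixed compact set but that $\delta$ along the orbit actually approaches its maximal value $r_\eta$, so that the resulting limit lands in $\mathcal{L}_\eta$ rather than merely in some $\mathcal{K}_\eta(r)$; the two ingredients invoked there, continuity of $\delta$ and Mahler's compactness criterion, are completely classical, so I do not anticipate any genuine obstacle.
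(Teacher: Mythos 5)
Your proof is correct and follows essentially the same route as the paper: the paper simply asserts that "another application of Proposition \ref{dynamical}" (with $c = c_\nu = r_\eta^3$) yields the corollary, leaving to the reader exactly the topological equivalence you work out — namely that "$a_s\Lambda_\x \notin \mathcal{K}_\eta(r)$ eventually, for some $r<r_\eta$" is the same as \equ{DI}. Your argument via the continuous shortest-vector function $\delta$ and Mahler compactness is a clean and accurate way to fill in that omitted step.
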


Thus our main result is reduced to proving that {any $\x\in\mathbf{BA}$ satisfies \equ{DI}}.

\section{The critical locus of a cylinder: results of Chalk--Rogers and Mahler}\label{locus-of-cylinder}

Let us now slightly change our definitions of \textit{critical determinant} and \textit{critical locus} 
so that they now refer to objects corresponding to a convex symmetric bounded domain rather than a norm. 

For a  convex symmetric {bounded} domain $B\subset \R^n$, we say a lattice $\Lambda$ in $\R^n$ is \textit{$B$-admissible} if 
$$
B\cap \Lambda = \{0\}.
$$
We denote {by $d(\Lambda)$ the covolume of a lattice $\Lambda$, and by} $\Delta(B)$ the infimum covolume over all $B$-admissible lattices and refer to it as the \textit{critical determinant} of $B$.
The (nonempty compact) set $\mathcal{L}(B)$ denotes the set of $B$-admissible lattices having covolume exactly $\Delta(B)$, and we refer to it as the \textit{critical locus} of $B$. {To reconcile it with the previous notation, if $B$ is the unit ball with respect to a norm $\eta$, then $\mathcal{L}_\eta = \frac1{\Delta(B)^{1/n}}\mathcal{L}(B)$ consists of lattices in $\mathcal{L}(B)$ scaled so that their covolume becomes equal to $1$.}
Our goal is to {prove} a theorem which gives the structure of {$\mathcal{L}(B)$ when $B$ is a} cylindrical convex symmetric {bounded} domain in $\R^3$.

Let $B$ be a convex symmetric bounded domain in $\R^2$.
Let $\nu$ be the Minkowski norm associated to $B$.
Let ${C = C_B}$ be {defined as} the cylinder $B\times [-1,1]$.
We can also write $C$ as
\begin{equation}\label{cylinder-defn}
    C = \{(x_1,x_2,x_3) : \nu(x_1,x_2) <1,\ |{x_3}|<1\}.
\end{equation}
We fix notation ${\pi_{+}}$, ${\pi_{-}}$ for the projections from $\R^3$ to the  {$x_3$-axis and the $(x_1,x_2)$-plane} respectively.

It is a well-known fact that \eq{equaldets}{\Delta(B) = \Delta(C_B).}
In fact, it is very well known; around the $1940$s three independent proofs appeared. 
Mahler in \cite{Ma1} gave a proof for the cylinder over the disc, which was based on a (non-lattice) packing result for discs in a polgyon.
This packing result admits a generalization (due to Fejes-T\'oth) which settles the issue for arbitrary cylinders.
See \cite[\S IX.5.4]{Ca} for the theorem of Fejes-T\'oth as well as the following section for the application to cylinders.
Another argument was given by Yeh in \cite{Y} and was based on results of Minkowski on the configuration of critical lattices for three-dimensional domains.

The argument we discuss here is given by Chalk and Rogers in \cite{CR}. 
It so happens that this argument is especially convenient for making a precise description of $\mathcal{L}(C)$ in terms of $\mathcal{L}(B)$.
We need to make a preparatory study of the argument therein before giving the proof of the following theorem.


\begin{thm}\label{critical-pieces}
Let $B \subset \R^2$ be a  convex symmetric bounded domain which is not a parallelogram, and let ${C = C_B}$.
Then the critical locus $\mathcal{L}(C)$ is  the union of the two sets
    \begin{equation}\label{critical-piece-21}
        \left\lbrace
        \left[ {\begin{array}{ccc}
   1 & 0 & 0 \\
   0 & 1 & 0 \\
   * & * & 1
  \end{array} } \right]
        \left[ \begin{array}{c | c} 
      M &  \begin{array}{c} 0 \\ 0 \end{array} \\ 
      \hline
      \begin{array}{c c} 0 & 0 \end{array} & 1 
     \end{array} \right] \Z^3 : M\Z^2 \in \mathcal{L}(B)
     \right\rbrace 
    \end{equation}
    and
    
    \begin{equation}\label{critical-piece-12}
        \left\lbrace
        \left[ {\begin{array}{ccc}
   1 & 0 & * \\
   0 & 1 & * \\
   0 & 0 & 1
  \end{array} } \right]
        \left[ \begin{array}{c | c} 
      M &  \begin{array}{c} 0 \\ 0 \end{array} \\ 
      \hline
      \begin{array}{c c} 0 & 0 \end{array} & 1 
     \end{array} \right] \Z^3 : M\Z^2 \in \mathcal{L}(B)
     \right\rbrace.
    \end{equation}
\end{thm}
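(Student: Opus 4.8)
The plan is to prove Theorem~\ref{critical-pieces} by combining the equality of critical determinants \equ{equaldets} with a structural analysis of how a $C$-admissible lattice of minimal covolume must sit relative to the cylinder axis. The key observation is that a critical lattice $\Lambda\in\mathcal{L}(C)$ has covolume $\Delta(C)=\Delta(B)$, and since $C=B\times[-1,1]$, the lattice $\Lambda$ must be ``tight'' in both the horizontal and vertical directions: it cannot be perturbed toward $0$ in any direction without meeting $C$. First I would recall from the Chalk--Rogers analysis (to be set up in the next two sections) that for such a lattice there is a nonzero lattice vector on the boundary of $C$, and in fact one must control the sublattice $\Lambda_0:=\Lambda\cap(\R^2\times\{0\})$ and the quotient. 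The heuristic is that $\Lambda$ either contains a horizontal plane lattice that is $B$-critical with a single remaining generator of height $1$ (giving the ``shear below'' form \eqref{critical-piece-21}), or it contains a vertical vector $(0,0,1)$-type generator with the complementary plane projecting to a $B$-critical lattice (giving the ``shear in the $x_3$-direction'' form \eqref{critical-piece-12}).

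The key steps, in order, would be: (1) Show that any $\Lambda\in\mathcal{L}(C)$ contains a primitive vector whose $\pi_+$-component (height) is nonzero and, after rescaling the flow/using admissibility, can be normalized so that $\Lambda$ has a generator of the form $(*,*,1)$; this uses that otherwise $\Lambda$ lies in finitely many horizontal planes too sparsely, contradicting minimality of covolume via \equ{equaldets}. (2) Consider $\Lambda':=\pi_-(\Lambda\cap(\R^2\times\{k\}))$ for the relevant integer heights $k$; admissibility of $\Lambda$ for $C$ forces each nonempty horizontal slice at height $|k|<1$, i.e.\ $k=0$, to be $B$-admissible as a subset of $\R^2$, and the covolume bookkeeping $d(\Lambda)=d(\Lambda_0)\cdot(\text{height increment})$ together with $d(\Lambda)=\Delta(B)$ and $d(\Lambda_0)\ge\Delta(B)$ pins down $d(\Lambda_0)=\Delta(B)$ when the height increment is $1$, so the slice lattice is genuinely $B$-critical, i.e.\ $M\Z^2\in\mathcal{L}(B)$. (3) Analyze the two cases for where the ``extra'' generator lives: if $\Lambda_0$ is already rank $2$ then the third generator is $(t_1,t_2,1)$ and lower-triangular conjugation absorbs the $M$-part, yielding \eqref{critical-piece-21}; if instead the primitive height-$1$ vector together with a rank-$1$ horizontal sublattice and a third vector with a vertical shear component arises, a change of basis produces \eqref{critical-piece-12}. (4) Conversely, verify that every lattice of the two displayed forms is $C$-admissible of covolume $\Delta(B)$ — this is the easy direction: the shear matrices are unimodular, the block structure gives covolume $d(M\Z^2)=\Delta(B)$, and one checks directly using \eqref{cylinder-defn} that no nonzero lattice point lies in $C$, because a point in $C$ has $|x_3|<1$ forcing its third coordinate (an integer, before shearing, and the shear below does not change it) to be $0$, and then the $(x_1,x_2)$-part lies in $M\Z^2\cap B=\{0\}$.

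The main obstacle I expect is Step~(1)–(2): ruling out ``exotic'' critical lattices whose intersection with $\R^2\times\{0\}$ has rank $0$ or $1$ but which are still $C$-admissible of minimal covolume, and showing such a lattice cannot actually achieve covolume $\Delta(B)$ unless it degenerates into one of the two listed families. This is precisely where the hypothesis that $B$ is \emph{not a parallelogram} enters — for a parallelogram (the analogue of the sup-norm case) there are additional critical configurations (the Hajós--Minkowski phenomenon allows non-triangular lattices), and the Chalk--Rogers argument, which I would lean on heavily here, is exactly the tool that excludes these in the non-parallelogram case by exploiting strict convexity of the boundary of $B$ at the critical contact points. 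Making the Chalk--Rogers reduction quantitative enough to conclude that the horizontal slice is not merely $B$-admissible but $B$-critical, and that the shear can be pushed entirely into one of the two ``off-diagonal'' slots, is the technical heart; the preparatory two sections promised in the introduction are what make this step rigorous, so in the proof proper I would cite those preparatory lemmas and focus on the covolume accounting and the explicit basis changes.
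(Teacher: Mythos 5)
Your proposal identifies the right ingredients in broad strokes (Chalk--Rogers, Mahler, the non-parallelogram hypothesis, a two-case dichotomy, and the easy verification of the displayed families being $C$-critical), but the specific strategy you propose --- slicing by $\Lambda_0 := \Lambda \cap (\R^2\times\{0\})$ and ``covolume bookkeeping'' --- does not match the actual obstruction, and contains two concrete gaps.

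First, the dichotomy by rank of $\Lambda_0$ is the wrong dichotomy. For a lattice in the family \eqref{critical-piece-21}, the third column of the basis matrix is $(0,0,1)^T$, but the first two basis vectors are $\bigl(M_{11},M_{21}, cM_{11}+dM_{21}\bigr)$ and $\bigl(M_{12},M_{22}, cM_{12}+dM_{22}\bigr)$; for generic irrational $c,d$ no nonzero lattice point lies in the plane $x_3 = 0$, so $\Lambda_0 = \{0\}$. These are not ``exotic'' configurations to be ruled out --- they are precisely one of the two families the theorem asserts, and your Step~(2) gives you no handle on them. (You have also swapped the two families: it is \eqref{critical-piece-12}, not \eqref{critical-piece-21}, whose lattices contain the rank-$2$ horizontal slice $M\Z^2\times\{0\}$; and the last sentence of your Step~(4), ``the shear below does not change it,'' is false for the lower-triangular shear, which does change $x_3$ --- the admissibility check for \eqref{critical-piece-21} must instead project first to the $(x_1,x_2)$-plane.)

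Second, even in the rank-$2$ case your bookkeeping $d(\Lambda) = d(\Lambda_0)\cdot(\text{height increment})$ with $d(\Lambda_0)\ge\Delta(B)$ does not pin down the height increment to be $1$; you assume it ``when the height increment is $1$,'' which is circular. A priori one could have height increment $h<1$ with $d(\Lambda_0) = \Delta(B)/h > \Delta(B)$ and still achieve covolume $\Delta(B)$, provided the non-zero-height cosets avoid $C$. Excluding this is exactly the hard content, and it is not excluded by covolume arithmetic alone.

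The paper's proof (\S\S3--4) takes a different route: it first reduces to an irreducible $B$ via Mahler's Lemma \ref{contains-irred}, then uses the explicit deformation Lemma \ref{lemma1-CR} to produce a path of critical lattices, lower-triangular and unipotent, arriving at a lattice with three independent points on $\partial C$ satisfying $|\pi_+(\cdot)|=1$. A R\'ado-type covering Lemma \ref{Dr.Rado} applied to the $\pi_-$-projections, combined with Theorem \ref{threepairs} and Mahler's irreducibility Lemmata \ref{irredcritset}--\ref{configuration-of-points}, then shows the projected lattice is $B$-critical and recovers the block form. The dichotomy is whether the deformation is trivial (giving \eqref{critical-piece-12}) or not; in the latter case a careful tangent-line argument shows $(0,0,1)$ belongs to the deformed lattice, yielding \eqref{critical-piece-21}. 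This deformation-and-configuration machinery, which you gesture at but do not deploy, is what replaces the covolume bookkeeping and handles the rank-$0$ case that your slicing approach cannot reach.
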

The critical locus for cylinders over parallelograms and, more generally, for parallelopideds in $\R^n$ is the content of the Haj\'{o}s--Minkowski theorem.
See \cite{H} for the proof. 
 {A more expository treatment is given in \cite{SS}}.
\begin{thm}\label{Hajos}
If $C$ is the unit ball for the supremum norm on $\R^3$, the critical locus $\mathcal{L}(C)$ is the union of manifolds
\begin{equation}\label{Hajos-Minkowski-locus}
        P\left\lbrace
        \left[ {\begin{array}{ccc}
   1 & * & * \\
   0 & 1 & * \\
   0 & 0 & 1
  \end{array} } \right] \Z^3 
     \right\rbrace
    \end{equation}
    over all $3\times 3$ permutation matrices $P$. 
\end{thm}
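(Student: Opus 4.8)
The plan is to split Theorem~\ref{Hajos} into three parts: the (routine) computation of $\Delta(C)$ together with the easy inclusion, a reformulation of membership in $\mathcal L(C)$ as a tiling statement, and then an appeal to Haj\'os's theorem, which supplies the only genuinely nontrivial input.

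First I would pin down the critical determinant and check the ``$\supseteq$'' half of \eqref{Hajos-Minkowski-locus}. Minkowski's convex body theorem applied to the open cube $C=(-1,1)^3$, of volume $2^3$, shows that any lattice of covolume $<1$ meets $C\sm\{0\}$; hence $\Delta(C)\ge 1$, and since $\Z^3$ is $C$-admissible and unimodular we get $\Delta(C)=1$, so $\mathcal L(C)$ consists exactly of the $C$-admissible lattices of covolume $1$. If $U$ is upper unitriangular and $0\ne v=Um$ with $m\in\Z^3$, reading the coordinates of $v$ from the bottom shows $v\notin C$: the last coordinate is $m_3$, so $v\in C$ would force $m_3=0$, then $m_2=0$, then $m_1=0$. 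Thus $U\Z^3$ is $C$-admissible with covolume $|\det U|=1$, and since every permutation matrix $P$ preserves $C$, so is $PU\Z^3$; so every lattice in \eqref{Hajos-Minkowski-locus} lies in $\mathcal L(C)$.

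Next I would recast the reverse inclusion as a statement about cube tilings. A unimodular lattice $\Lambda$ is $C$-admissible precisely when $\|v\|_\infty\ge 1$ for every $v\in\Lambda\sm\{0\}$, equivalently when the closed cubes $v+[-\tfrac12,\tfrac12]^3$, $v\in\Lambda$, have pairwise disjoint interiors; since these cubes have volume $1$ and there are $d(\Lambda)^{-1}=1$ of them per unit volume, such a packing is forced to be a tiling $\R^3=\bigcup_{v\in\Lambda}\big(v+[-\tfrac12,\tfrac12]^3\big)$. So $\mathcal L(C)$ is exactly the set of unimodular lattices in $\R^3$ that tile space by unit cubes, and it remains to show each such $\Lambda$ has the form $PU\Z^3$.

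Here the key input is the theorem of Haj\'os: in every lattice tiling of $\R^n$ by unit cubes, two of the cubes share a complete $(n-1)$-dimensional face; equivalently, the lattice contains a standard basis vector $\pm e_i$. I would take this as a black box, citing \cite{H} (with the exposition in \cite{SS}); its proof, which encodes the tiling as a factorization of a finite abelian group and shows that one factor must be a subgroup, is the real obstacle and lies well outside the scope of this paper. Granting it, I would conclude by induction on $n$: given a unimodular $\Lambda$ tiling $\R^n$ by unit cubes, Haj\'os produces, after a coordinate permutation (to be absorbed into the final $P$), a vector $e_1\in\Lambda$; then $\bar\Lambda:=q(\Lambda)$, where $q\colon\R^n\to\R^{n-1}$ forgets the first coordinate, is unimodular, and for $0\ne\bar v\in\bar\Lambda$ a lift $v\in\Lambda$ with first coordinate in $[-\tfrac12,\tfrac12)$ has $\|\bar v\|_\infty=\|v\|_\infty\ge 1$ (the supremum norm of $v$ cannot be attained at its first coordinate), so $\bar\Lambda$ again tiles $\R^{n-1}$ by unit cubes. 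By the inductive hypothesis, after permuting the coordinates $2,\dots,n$ one has $\bar\Lambda=\bar U\Z^{n-1}$ with $\bar U$ upper unitriangular; lifting the columns of $\bar U$ to vectors of $\Lambda$ and taking $e_1$ as the first basis vector yields a basis of $\Lambda$ whose matrix is upper unitriangular (first column $e_1$; the $j$-th column has a $1$ in position $j$, zeros below it, and arbitrary entries above). Specializing to $n=3$ and collecting the coordinate permutations into a single permutation matrix $P$ gives $\Lambda=PU\Z^3$, which together with the easy inclusion proves Theorem~\ref{Hajos}. (Some of the six families coincide, which is why \eqref{Hajos-Minkowski-locus} is written as a union rather than a disjoint one.)
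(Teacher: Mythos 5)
The paper does not give its own proof of Theorem~\ref{Hajos}: it only states the result and cites Haj\'os \cite{H} and Stein--Szab\'o \cite{SS}. Your proposal supplies a correct derivation of the stated parametrization from the core combinatorial content of Haj\'os's theorem (namely, that every lattice tiling of $\R^n$ by unit cubes contains some $\pm e_i$), which you are right to treat as a black box. The easy inclusion (reading coordinates from the bottom to show an upper-unitriangular image of a nonzero integer vector escapes the open cube), the identification of $\mathcal L(C)$ with the unimodular lattices tiling $\R^3$ by unit cubes, and the inductive unwinding — choosing lifts with first coordinate in $[-\tfrac12,\tfrac12)$ to see that the projected lattice still tiles, then stacking the bases — are all correct and match the standard route that the cited references take; you are simply writing out the translation that the paper elides. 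One minor remark: in the reverse-inclusion argument it is worth saying explicitly that a density-one lattice packing by closed cubes is a tiling because the union is closed (any compact set meets only finitely many translates), but this is a standard point and does not affect correctness.
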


The following lemma appears as Lemma $1$ in \cite{CR}.
We write out the entire proof since we need an explicit form of the deformation, {see \eqref{shear?}  below,} appearing therein.
\begin{lem}\label{lemma1-CR} {Let $C$ be as in \eqref{cylinder-defn}.
Then,} given any critical lattice $\Lambda$ of $C$, there is a path of $C$-critical lattices $\Lambda(t)$ with $\Lambda(0) = \Lambda$, and with $\Lambda(1)$ having three linearly independent points satisfying
\begin{equation}\label{3points}
    \nu\big({\pi_{-}}(*)\big) \leq 1,\ |{\pi_{+}}(*)| = 1.
\end{equation}
In particular, three independent lattice points on the top of the cylinder can be found.
Moreover, this path is given by 
   $\{ L_t\Lambda\}$,
where $L_t$ is a smooth family of lower triangular unipotents having $(2,1)$ entry zero.
\end{lem}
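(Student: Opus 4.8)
The plan is to start from an arbitrary $C$-critical lattice $\Lambda$ and deform it through $C$-critical lattices until three independent lattice points are forced onto the top face $\{|x_3|=1\}$ of the cylinder. First I would recall the standard fact from the geometry of numbers that a critical lattice $\Lambda$ for a bounded convex symmetric body $C$ is \emph{admissible}, i.e.\ $\Lambda\cap C=\{0\}$, and moreover has ``enough'' points on the boundary $\partial C$: by a theorem of Minkowski (see \cite{Ca}), a critical lattice of a convex symmetric body in $\R^3$ has at least $3$ pairs of independent points on $\partial C$. The boundary $\partial C$ of the cylinder $C = \{\nu(x_1,x_2)<1,\ |x_3|<1\}$ splits into the \emph{side} $\{\nu(\pi_-(x))=1,\ |\pi_+(x)|\le 1\}$ and the two \emph{caps} $\{|\pi_+(x)|=1,\ \nu(\pi_-(x))\le 1\}$. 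The goal \eqref{3points} is exactly the statement that all three independent boundary points can be taken on the caps.

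Next I would introduce the deformation. Suppose $\Lambda$ has a primitive point $v$ on the side of $C$, so $\nu(\pi_-(v))=1$ and $|\pi_+(v)|=:h<1$. The idea is to apply a lower-triangular unipotent $L_t$ with zero $(2,1)$-entry --- so $L_t$ fixes the $(x_1,x_2)$-plane pointwise, fixes $\pi_-$, and shears the third coordinate by $x_3\mapsto x_3 + t_1 x_1 + t_2 x_2$ --- chosen so that $\pi_+(L_t v)$ \emph{increases in absolute value} while no other lattice point is pushed into the open cylinder $C$. Concretely, since $L_t$ preserves $\pi_-$, a lattice point $w$ with $\nu(\pi_-(w))\ge 1$ can never enter $C$ no matter how we shear; the only points we must control are those with $\nu(\pi_-(w))<1$, and among these the admissibility of $\Lambda$ already forces $|\pi_+(w)|\ge 1$. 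A short convexity/continuity argument (this is the content of Lemma $1$ of \cite{CR}, which I would follow, recording the explicit family $L_t$ as promised in \eqref{shear?}) shows that one can move $t$ along a ray until a second independent lattice point lands on a cap; one then restricts further shears to preserve that point, and repeats. Because each step strictly increases a quantity like $\sum |\pi_+(\cdot)|$ over a finite set of relevant boundary points, after finitely many steps --- and after reparametrising the concatenated paths into a single smooth family, which I can do since each $L_t$ is lower-triangular unipotent with $(2,1)$-entry zero and this set is closed under products --- we reach a lattice $\Lambda(1)$ with three independent points on the caps, all still satisfying $\nu(\pi_-(*))\le 1$ and now $|\pi_+(*)|=1$. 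Throughout, covolume is preserved (the $L_t$ are unimodular) and admissibility is preserved by construction, so $\Lambda(t)$ stays in $\mathcal{L}(C)$.

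The main obstacle is the bookkeeping in the inductive step: one must argue that the shear directions that ``freeze'' the boundary points already placed on the caps still leave enough freedom to drag a \emph{new} independent point out to a cap, and that the process genuinely terminates rather than cycling. This is exactly where one uses that a critical lattice has $\ge 3$ independent boundary points together with the fact that points on the side (with $\nu(\pi_-)=1$) are ``harmless'' under these shears, so the only way the deformation can get stuck is if three independent points are already on the caps. I would handle the termination by tracking the dimension of the linear span of the ``frozen'' cap points: it strictly increases at each stage, so at most three stages occur, and the piecewise-smooth concatenation is then smoothed in the usual way. A minor point to check is that the resulting $L_t$ really can be taken with vanishing $(2,1)$-entry throughout; this follows because shears in that entry act only inside the $(x_1,x_2)$-plane and are never needed --- all the ``work'' is done by the two entries in the bottom row, which is precisely the form asserted in the lemma.
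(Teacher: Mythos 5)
Your high-level plan---deform $\Lambda$ by lower-triangular shears $L_t$ with vanishing $(2,1)$ entry so that more lattice points are pushed onto the caps, iterating until three independent points sit on the top face---is the same as the paper's (which follows Chalk--Rogers). But your write-up skips over exactly the part that makes the argument nontrivial, and in two places you assert things that are false as stated.

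First, you claim that admissibility is preserved ``by construction'' because $L_t$ fixes $\pi_-$, so a point with $\nu(\pi_-(w))\ge 1$ never enters $C$, while a point with $\nu(\pi_-(w))<1$ ``already has $|\pi_+(w)|\ge 1$ by admissibility of $\Lambda$.'' That last clause is about $\Lambda(0)$, not $\Lambda(t)$: under a vertical shear the third coordinate of $w$ changes, so a cap point with $\nu(\pi_-(w))<1$ and $|\pi_+(w)|=1$ can perfectly well be pushed into the open cylinder. This is not a side issue; it is the entire content of the case $\tau<\infty$ in the paper's proof, where one defines $\tau$ as the last admissible time, chooses $\rho$ and $\delta$ carefully, and shows that the only way admissibility can fail just past $\tau$ is if a \emph{new} independent lattice point has landed on a cap at time $\tau$. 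You cite Lemma 1 of \cite{CR} as handling this, but the Lemma you are asked to prove \emph{is} that argument, so deferring to it is circular.

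Second, you assert ``covolume is preserved (the $L_t$ are unimodular).'' In the deformation actually used (equation \eqref{shear?} in the paper), the $(3,3)$ entry of $L_t$ is $1+t(P^{-1})_{33}$, which need not be $1$. The paper does not \emph{assume} unimodularity; it \emph{deduces} it: after arranging the covolume derivative to be nonpositive, criticality of $\Lambda$ forces the covolume to be constant on $[0,\tau]$, which gives $(P^{-1})_{33}=0$ and only then shows $L_t$ is unipotent. Your proposal has the logic backwards, and it also never invokes criticality (as opposed to mere admissibility) of $\Lambda$, which is what makes the covolume argument go through. Finally, your termination heuristic (``strictly increases a quantity like $\sum|\pi_+|$'') is not obviously well defined or monotone; the paper instead tracks $n$, the number of independent boundary points satisfying \eqref{3points}, and shows each stage increases $n$ by at least one, so at most two stages are needed. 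These are genuine gaps, not stylistic differences.
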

\begin{proof}
Let $\Lambda$ be $C$-critical. Then $\Lambda$ must contain three independent points on the boundary $\partial C$:
for if the points of $\Lambda \cap \partial C$ were contained in a plane, \cite[Theorem 7.8]{L} would enable us to construct an admissible lattice of smaller covolume, a contradiction.
So, let 
$\vp_1, \vp_2, \vp_3$
be any three independent points in $\Lambda \cap \partial C$.
Let $n$ be the number of linearly independent points of $\Lambda \cap \partial C$ satisfying \eqref{3points}. 
{Clearly}
$n$ cannot be $0$, for otherwise it would permit a contraction in the vertical direction. 
In each of the cases $n= 1, 2$, we show that a shear can be applied to obtain a lattice $\Lambda'$ with $3$ independent points on $\partial C$ satisfying \eqref{3points}.

Say $n=1$. 
Among the three independent points $\vp_i$ in $\Lambda \cap \partial C$, we can assume without loss of generality that $\vp_1$ satisfies \eqref{3points} and that $\vp_2, \vp_3$ satisfy
\begin{equation}\label{sides-of-cylinder}
    \nu\big({\pi_{-}}(*)\big) = 1,\ |{\pi_{+}}(*)| < 1.
\end{equation}
Let $\vq_1,\ \vq_2,\ \vq_3$ be any basis for $\Lambda$. 
This gives us an integral $3\times 3$ matrix $M$ with
$$
\left[ {\begin{array}{ccc}
   \vp_1 & \vp_2 & \vp_3 \\
  \end{array} } \right] 
  = \left[ {\begin{array}{ccc}
   \vq_1 & \vq_2 & \vq_3 \\
  \end{array} } \right]M
$$
from which we can write
\begin{equation*}\label{Lambda-in-terms-of-3points}
\Lambda 
  = \left[ {\begin{array}{ccc}
   \vq_1 & \vq_2 & \vq_3 \\
  \end{array} } \right] \Z^3
  = \left[ {\begin{array}{ccc}
   \vp_1 & \vp_2 & \vp_3 \\
  \end{array} } \right] M^{-1} \Z^3.
\end{equation*}
Consider the continuous family of lattices
\begin{equation}\label{parameterization-of-continuous-family}
    \Lambda(t) = \left[ {\begin{array}{ccc}
   \vp_1 & \vp_2 & \vp_3 + t\ve_3 \\
  \end{array} } \right] M^{-1} \Z^3
\end{equation}
where $\ve_3$ is the third standard basis vector in $\R^3$.
 {Writing $P = \left[ {\begin{array}{ccc}
   \vp_1 & \vp_2 & \vp_3 \\
  \end{array} } \right]$ and $Q = \left[ {\begin{array}{ccc}
   \vq_1 & \vq_2 & \vq_3 \\
  \end{array} } \right]$}, we see that $M^{-1}= P^{-1}Q$, whence the lattice $\Lambda(t)$ can actually be written as
\begin{equation}\label{shear?}
\begin{split}
    \Lambda(t) &= 
    \left[ {\begin{array}{ccc}
   \vp_1 & \vp_2 & \vp_3 + t\ve_3 \\
  \end{array} } \right] P^{-1} Q\Z^3 \\
  &= 
  \left[ {\begin{array}{ccc}
   1 & 0 & 0 \\
   0 & 1 & 0 \\
   t(P^{-1})_{31} & t(P^{-1})_{32} & 1+t(P^{-1})_{33}
  \end{array} } \right]\Lambda,
  \end{split}
\end{equation}
where the constants on the bottom row indicate the corresponding entries in the matrix $P^{-1}$.
Without loss of generality, we assume that the covolume function {$$d\big(\Lambda(t)\big) = \left|1+t(P^{-1})_{33}\right|d(\Lambda)$$ has nonpositive derivative at $0$, i.e.\ $(P^{-1})_{33} \le 0$}. 
(If this is not the case, the argument continues by considering $-t$ instead of $t$.)
Let $\tau$ be {the largest  (possibly infinite)} real {number} such that
\begin{equation*}
0\leq t \leq \tau \text{ implies } \Lambda(t) \text{ is $C$-admissible}.
\end{equation*}
By the assumption on the derivative of the covolume, 
$\Lambda(t)$ must actually be $C$-critical for all $0\leq t\leq \tau$.  
So, the covolume function is constant in this {(possibly singleton or even infinite)} interval. 
We consider cases wherein $\tau$ is either infinite or not.
\begin{enumerate}
    \item[(a)] $\tau = +\infty$. Every $\Lambda(t)$ with $t\geq 0$ is $C$-critical.
    Note, in this case, since the covolume is constant, we must have $(P^{-1})_{33}=0$ in \eqref{shear?}.
    Choosing $t_0$ appropriately, we have $\vp_1$ and $\vp_3 + t_0\ve_3$ (which belong to $\Lambda(t)$) satisfying \eqref{3points}.
    The crucial observation here is that 
    \begin{equation}\label{shear-case-tau=infinity}
    \Lambda(t_0) = L(t_0)\Lambda
    \end{equation} where $L(t)$ is a curve in the lower triangular unipotents with $(2,1)$ entry $0$.
    \smallskip 
    
    \item[(b)] $\tau < \infty$. Consider $\Lambda(\tau)$, which is $C$-critical.
    Suppose we have no lattice point of $\Lambda(\tau)$ satisfying \eqref{3points} and {not proportional to} $\vp_1$.
    Choose $\rho>0$ so that every lattice point in $\Lambda(\tau)$ $\rho$-close (in some fixed norm $\|\cdot\|$) to $C$ is actually in the closure $\overline{C}$.
    Write 
    $$
    \Lambda(\tau) = N(\tau)\Z^3,\ \ \Lambda(\tau + \varepsilon) = N(\tau+\varepsilon)\Z^3
    $$
    where $N(\cdot)$ {is the product} of matrices appearing in the formula \eqref{parameterization-of-continuous-family}.
    Let $\delta>0$ be such that for all $\varepsilon \in [0,\delta]$, we have that
    $d\big(\Lambda(\tau+\varepsilon)\big) \geq \Delta(C)/2$ and that
    \begin{equation}\label{delta-small}
    \left(N(\tau+\varepsilon)\uu \in C \text{ with } \uu\in \Z^3\right) \ \text{ implies }\ 
    \left(\|N(\tau+\varepsilon)\uu-N(\tau)\uu \| \leq \rho\right).
    \end{equation}
    Now if we have $\varepsilon$ in this range, and a nonzero point $N(\tau+\varepsilon)\uu$ lying in $C$, then 
     \eqref{delta-small} and the definition of $\rho$ show that $N(\tau)\uu \in \overline{C}$.
    Since $\Lambda(\tau)$ is $C$-admissible, $N(\tau)\uu$ is in $\partial C$.
    By the assumption on $\Lambda(\tau)$ at the start of this case, we see that 
    \begin{equation}\label{change-for-n=2}
   \text{either } {\pi_{+}}\big(N(\tau)\uu\big) < 1, \text{ or } N(\tau)\uu = \pm \vp_1.
    \end{equation}
    The second eventuality is prevented since, by definition of $N(\cdot)$, we would then necessarily have $N(\tau +\varepsilon){\uu} = \pm \vp_1$, contrary to $N(\tau +\varepsilon){\uu} $ lying in $C$.
    Thus we are in the first case where ${\pi_{+}}\big(N(\tau) \uu\big) <1$, and this implies that $\nu\big({\pi_{-}}(N(\tau)\uu)\big)=1$.
    This is again a contradiction, since ${\pi_{-}}\big(N(\tau +\varepsilon)\uu\big) = {\pi_{-}}\big(N(\tau)\uu\big)$, and $N(\tau + \varepsilon){\uu} $ was presumed to be in $C$.
    
    What this shows is that $\Lambda(\tau+\varepsilon)$ must be $C$-admissible for every $\varepsilon$ in $[0,\delta]$, which is again incompatible with the definition of $\tau$. 
    Thus $\Lambda(\tau)$ must have more than $n=1$ independent points satisfying \eqref{3points}.
    Moreover, just as in equation \eqref{shear-case-tau=infinity}, we have \begin{equation*}\label{shear-case-tau=finite}
        \Lambda(\tau) = L(\tau)\Lambda ,
    \end{equation*} where $L(t)$ is a curve in the lower triangular unipotents with $(2,1)$ entry zero.
 \end{enumerate}
 
 We can now assume that the $C$-critical lattice $\Lambda$ has two independent points on $\partial C$ satisfying \eqref{3points}.
 The argument repeats almost identically taking $\vp_1, \vp_2$ to satisfy \eqref{3points} and $\vp_3$ satisfying \eqref{sides-of-cylinder}.
 {The only change in the course of this repetition is that the two eventualities in \eqref{change-for-n=2} become
 \begin{equation}
     \text{either }   {\pi_{+}}\big(N(\tau)\uu\big) < 1, \text{ or } N(\tau)\uu \in  \operatorname{span}_\Z \{\vp_1, \vp_2\}.
 \end{equation}
 And the second eventuality is again prevented by the definition of $N(\cdot)$ and the assumption that $N(\tau + \varepsilon) \uu \in C$.}
\end{proof}

\begin{lem}\label{cor-lemma1}
The lattice $\Lambda(1)$ in Lemma \ref{lemma1-CR} has at least one point satisfying
\begin{equation}\label{cor-to-1.1}
 \nu\big({\pi_{-}}(*)\big)<1, \ |{\pi_{+}}(*)| =1.   
\end{equation}
\end{lem}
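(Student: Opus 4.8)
The plan is to reduce the claim to a packing inequality in the plane.  Lemma~\ref{lemma1-CR} hands us three linearly independent points of $\Lambda(1)$ satisfying \eqref{3points}.  If one of them has $\nu\big(\pi_-(*)\big)<1$ it already satisfies \eqref{cor-to-1.1} and we are done, so assume all three lie on the rim, i.e.\ have $\nu\big(\pi_-(*)\big)=1$.  Replacing some of them by their negatives (which affects neither of the conditions in \eqref{3points} nor linear independence) we may assume $\pi_+$ equals $1$ on all three; write them as $\vp_i=(\mathbf{a}_i,1)$ with $\mathbf{a}_i\in\partial B$.  Suppose, towards a contradiction, that no point of $\Lambda(1)$ satisfies \eqref{cor-to-1.1}.

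The first step is to extract the relevant planar lattice.  Since $\vp_2-\vp_1$ and $\vp_3-\vp_1$ are independent and lie in $\{x_3=0\}$, the subgroup $\Lambda_0:=\Lambda(1)\cap\{x_3=0\}$ has rank $2$, so $\pi_+(\Lambda(1))\cong\Lambda(1)/\Lambda_0$ is infinite cyclic, and, as it contains $1$, it equals $\tfrac1N\Z$ for some integer $N\ge1$.  Choose $\vv\in\Lambda(1)$ with $\pi_+(\vv)=\tfrac1N$, and set $\mathbf{h}:=\pi_-(\vv)$ and $L_0:=\pi_-(\Lambda_0)$.  Then $\Lambda(1)=\Z\vv\oplus\Lambda_0$, hence $d(\Lambda(1))=\tfrac1N d(\Lambda_0)=\tfrac1N d(L_0)$; as $\Lambda(1)$ is $C$-critical, $d(L_0)=N\,\Delta(C)=N\,\Delta(B)$ by \equ{equaldets}.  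Moreover $L_0$ is $B$-admissible, for a nonzero $\ell\in L_0$ is $\pi_-(w)$ for some $0\ne w\in\Lambda_0$, i.e.\ $w=(\ell,0)\in\Lambda(1)$, and then $C$-admissibility forces $\nu(\ell)\ge1$.  Now, for $k\in\Z$ and $w\in\Lambda_0$, the lattice point $k\vv+w$ has third coordinate $k/N$: for $1\le k\le N-1$ it lies in the open slab $|x_3|<1$, so $C$-admissibility gives $\nu\big(k\mathbf{h}+\pi_-(w)\big)\ge1$, while for $k=N$ it satisfies $|\pi_+|=1$, so the contradiction hypothesis gives the same.  Thus $\dist_\nu(k\mathbf{h},L_0)\ge1$ for every $k=1,\dots,N$, and also for $k=-1,\dots,-N$ because $L_0=-L_0$.

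The contradiction now follows from a packing bound.  The previous paragraph shows that $0,\mathbf{h},2\mathbf{h},\dots,N\mathbf{h}$ are $N+1$ distinct points of the torus $\R^2/L_0$ that are pairwise at $\nu$-distance at least $1$ in it; equivalently, the $N+1$ sets $\tfrac12 B+i\mathbf{h}$ $(0\le i\le N)$ are pairwise disjoint in $\R^2/L_0$.  Lifting periodically, this is a packing of translates of the symmetric convex body $\tfrac12 B$ of density $\dfrac{(N+1)\,\area(\tfrac12 B)}{d(L_0)}=\dfrac{(N+1)\,\area(B)}{4N\,\Delta(B)}$.  By the theorem of Fejes-T\'oth (see \cite[\S IX.5.4]{Ca}) no packing of translates of $\tfrac12 B$ is denser than the densest lattice packing, which (a lattice $\Lambda$ packs $\tfrac12 B$ exactly when $\Lambda$ is $B$-admissible, i.e.\ $d(\Lambda)\ge\Delta(B)$) has density $\area(\tfrac12 B)/\Delta(B)=\area(B)/4\Delta(B)$.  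Comparing the two densities forces $N+1\le N$, which is absurd.  Hence $\Lambda(1)$ does have a point satisfying \eqref{cor-to-1.1}.

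The one thing to get right is conceptual: the two hypotheses — $C$-admissibility on the open slab $|x_3|<1$, and the assumed absence of a point of type \eqref{cor-to-1.1} on the faces $|x_3|=1$ — together produce \emph{exactly one more} pairwise-$1$-separated point of $\R^2/L_0$ than a torus of area $N\,\Delta(B)$ can accommodate; the remaining steps are routine verifications.  (Should one wish to avoid the Fejes-T\'oth theorem, one can note that whenever the order $d$ of $\mathbf{h}$ modulo $L_0$ satisfies $d\le 2N+1$, the lattice $\Z\mathbf{h}+L_0$ is itself $B$-admissible of covolume $d(L_0)/d\le N\,\Delta(B)/(N+1)<\Delta(B)$, contradicting the minimality of $\Delta(B)$; but treating large $d$ that way is awkward, whereas Fejes-T\'oth settles all $N$ uniformly.)
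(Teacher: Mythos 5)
Your proof is correct, but it takes a much heavier route than the paper. The paper's argument is a one-sentence local perturbation: if $\Lambda$ is $C$-critical and no nonzero point satisfies \eqref{cor-to-1.1}, then every lattice point with $\nu(\pi_-(\cdot))<1$ has $|\pi_+(\cdot)|$ strictly greater than $1$; since only finitely many of them lie in, say, $B\times[-2,2]$, there is a uniform gap $m>1$, and the contraction $\operatorname{diag}(1,1,\lambda)$ with $1<1/\lambda<m$ produces a $C$-admissible lattice of covolume $\lambda\, d(\Lambda)<d(\Lambda)$, contradicting criticality. This argument is elementary, needs neither Lemma~\ref{lemma1-CR} nor \eqref{equaldets} nor any packing theory, and in fact proves the statement for an arbitrary $C$-critical lattice, not just $\Lambda(1)$ (the paper even advertises this with the phrase ``this is true more generally'').

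Your argument instead identifies the horizontal cross-section $\Lambda_0=\Lambda(1)\cap\{x_3=0\}$ and the step size $1/N$ of $\pi_+(\Lambda(1))$, shows the points $0,\mathbf{h},\dots,N\mathbf{h}$ give a $1$-separated periodic configuration in $\R^2/L_0$, and invokes Fejes-T\'oth's packing bound to produce the numerical contradiction $N+1\le N$. All the intermediate claims check out (distinctness of the $N+1$ cosets, the covolume bookkeeping $d(L_0)=N\,d(\Lambda(1))$, the reduction to $B$-admissibility of $L_0$). The argument is essentially a re-run of the Mahler/Fejes-T\'oth proof of \eqref{equaldets}, re-tooled to extract the extra point on the lid; as you note, the parenthetical alternative bypassing Fejes-T\'oth only covers $d\le 2N+1$. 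So: correct, self-consistent, but it imports the packing machinery that the Chalk--Rogers line of argument was specifically designed to avoid, and it is tied to the configuration furnished by Lemma~\ref{lemma1-CR}, whereas the intended proof is a two-line contraction argument valid for any $C$-critical lattice. Worth knowing the shorter route exists.
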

\begin{proof}
This is true more generally. 
Let $\Lambda$ be $C$-critical.
If $\Lambda$ has no points satisfying 
\eqref{cor-to-1.1}, 
a contraction in the vertical direction would produce a $C$-admissible lattice with smaller covolume.
\end{proof}

We also need the following technical lemma from \cite[Lemma 2]{CR}.
Our statement here swaps out the strictly convex assumption for something that will be more convenient for our application.

\begin{lem}\label{Dr.Rado}
Let $B$ be a convex symmetric domain in $\R^2$. Suppose $\vp_1, \vp_2, \vp_3$ are three non-collinear points of $\overline{B}$ with at least one being in $B$.
Moreover, assume that the interior of a segment joining any two of the $\vp_i$ is in $B$.
Let $\Lambda$ be the lattice 
$$
\Z(\vp_2-\vp_1) + \Z(\vp_3-\vp_1).
$$
Then
$
\R^2 = \Lambda + B.
$
\end{lem}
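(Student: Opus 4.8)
\textbf{Proof proposal for Lemma \ref{Dr.Rado}.}

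The plan is to show that every point of $\R^2$ lies in $\Lambda + B$, where $\Lambda = \Z(\vp_2 - \vp_1) + \Z(\vp_3 - \vp_1)$. By translating, we may assume $\vp_1 = 0$, so that $\Lambda = \Z\vp_2 + \Z\vp_3$ and the three points $0, \vp_2, \vp_3$ span a fundamental parallelogram (call it $\Pi$) for $\Lambda$, subdivided by the diagonal $[\vp_2,\vp_3]$ into two triangles $T_1 = \operatorname{conv}\{0,\vp_2,\vp_3\}$ and $T_2 = \operatorname{conv}\{\vp_2,\vp_3,\vp_2+\vp_3\}$. Since $\Lambda + \Pi = \R^2$, it suffices to prove that $\Pi \subset \Lambda + B$, and for this it is enough to cover each of $T_1$ and $T_2$; note $T_2 = (\vp_2 + \vp_3) - T_1$, and since $B$ is symmetric and $\Lambda$ is a group, covering $T_1$ by $\Lambda + B$ automatically covers $T_2$. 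So the crux is: $T_1 = \operatorname{conv}\{0, \vp_2, \vp_3\} \subset \Lambda + B$.

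To see this, first observe that the hypotheses give us a lot of $B$: the three open segments $(0,\vp_2)$, $(0,\vp_3)$, $(\vp_2,\vp_3)$ lie in $B$, and at least one vertex (say $\vp_1 = 0$) may be taken to be in $B$ as well (the convex-symmetric-domain setting allows us to move the ``interior'' vertex to $\vp_1$ after relabeling, or else handle the symmetric case using $-B$). Now take an arbitrary $\x \in T_1$. The idea is to write $\x$ as a point of $B$ plus a lattice vector by pushing $\x$ off one of the edges. Concretely, consider the three ``corner'' translates $B$, $\vp_2 + B$, and $\vp_3 + B$: I claim their union covers $T_1$. Indeed, $B \supset (0,\vp_2) \cup (0,\vp_3)$ and, being a neighborhood of these open segments, $B$ covers a neighborhood within $T_1$ of the two edges emanating from $0$; similarly $\vp_2 + B$ covers (within $T_1$) a neighborhood of the two edges at $\vp_2$ — one of which is $[\vp_2,\vp_3]$ — because $\vp_2 + B$ is a neighborhood of $\vp_2 + (-\vp_2, 0)\cup \vp_2 + (0,\vp_3-\vp_2) = (0,\vp_2)\cup(\vp_2,\vp_3)$ by symmetry of $B$; and $\vp_3 + B$ likewise covers a neighborhood of the edges at $\vp_3$. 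Since a triangle is covered by any family of open sets that each cover a full neighborhood of two of its sides, together with the interior — here convexity of $B$ and the fact that the barycenter or some interior point is hit — one deduces $T_1 \subset B \cup (\vp_2 + B) \cup (\vp_3 + B) \subset \Lambda + B$. Combined with the previous paragraph, $\Pi \subset \Lambda + B$, hence $\R^2 = \Lambda + B$.

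The step I expect to be the main obstacle is the last covering claim — verifying rigorously that the three translates $B, \vp_2+B, \vp_3+B$ (or four, adding $(\vp_2+\vp_3)-B$, i.e. working with all of $\Pi$ at once) genuinely leave no gap in the interior of the triangle, rather than only near its edges. The cleanest way around this is probably to argue by contradiction: if some $\x \in \Pi$ were not in $\Lambda + B$, then since $\overline{B}$ is compact and $\Lambda + \overline{B}$ is closed, there is a point $\x_0$ in the interior of $\Pi \setminus (\Lambda + B)$ at maximal distance from $\partial \Pi$; moving along the segment from $\x_0$ toward the nearest lattice point $\lambda$ and using that $[\,0,\vp_i\,]$ and $[\vp_2,\vp_3]$ lie in $B$ (so $B$ contains a ``star-shaped'' chunk around $0$ reaching into $\Pi$), one pushes $\x_0$ into $\lambda' + B$ for a neighboring $\lambda'$, contradicting maximality. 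Alternatively, one can invoke the identical planar covering argument already used implicitly by Chalk–Rogers in \cite{CR} for the strictly convex case and note that strict convexity was used there only to guarantee the open segments lie in $B$, which is now a hypothesis; so the proof goes through verbatim with ``strictly convex'' replaced by the stated segment condition.
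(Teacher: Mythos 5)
Your overall plan — reduce to a fundamental parallelogram for $\Lambda$, split it into two triangles, and cover the triangle $T_1 = \operatorname{conv}\{0, \vp_2-\vp_1, \vp_3-\vp_1\}$ by the three lattice translates $B$, $(\vp_2-\vp_1)+B$, $(\vp_3-\vp_1)+B$ — is structurally the same as what the paper does (after unwinding the algebra, the paper's argument is exactly a proof that $T_1$ is covered by those three translates). But there are two genuine gaps, and the second one is the entire substance of the lemma.

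First, the normalization ``by translating, we may assume $\vp_1 = 0$'' is not legitimate. Translating the whole picture by $-\vp_1$ replaces $B$ by $B - \vp_1$, which is no longer symmetric about the origin; but you immediately use symmetry of $B$ (to pass from $T_1$ to $T_2$, and later to argue that $\vp_2 + B$ contains a neighborhood of $(0,\vp_2)$). If instead you keep $B$ fixed and only ``mentally'' set $\vp_1 = 0$, then the hypotheses $\vp_i\in\overline B$ and ``segments in $B$'' no longer refer to the triangle $T_1$: for example, the segment from $0$ to $\vp_2 - \vp_1$ is a translate of $[\vp_1,\vp_2]$ and there is no reason for it to lie in $B$. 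Being able to assume $\vp_1$ is the \emph{center} of $B$ is a much stronger hypothesis than $\vp_1\in B$ and cannot be arranged. (Notably, in the application inside Theorem \ref{critical-pieces} the point $\pi_-(\vp_1)$ is an interior point of $B$ that is typically far from the center.) Once $\vp_1 = 0$ really is assumed, the covering becomes nearly trivial — the open triangle already sits inside $B$ — which is a sign that the reduction has thrown away the hard case.

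Second, the covering claim itself is asserted but not proved. The heuristic that ``a triangle is covered by any family of open sets each covering a neighborhood of two of its sides together with the interior'' is false: three thin tubes around the sides of a triangle that all meet at the centroid can easily leave an uncovered annular region. The subsequent suggestions (a compactness/maximal-distance argument; ``invoke the identical Chalk--Rogers argument verbatim'') are just pointers to where a proof might be found, not arguments. The paper's proof is precisely the missing step: starting from an arbitrary $\vq$, it chooses a representative $\vr = \vp_1 - \vq$ in $\operatorname{conv}\{\vp_1,\vp_2,\vp_3\}$, subdivides that hull into the three sub-triangles $\operatorname{conv}\{0,\vp_i,\vp_j\}$, writes $\vr = s\vp_i + t\vp_j$ with $0\le s,t$, $0 < s+t\le 1$, and then shows by a symmetry-plus-convexity computation that one of the two lattice translates $\vp_i - \vr = (1-s)\vp_i + t(-\vp_j)$ or $\vp_j - \vr = s(-\vp_i) + (1-t)\vp_j$ lies in the open set $B$, using the segment hypothesis exactly at the point where strict convexity is used by Chalk--Rogers. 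That explicit convex-combination argument is what your write-up is missing; without it the lemma is not proved.
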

\begin{proof}
Given any $\vq \in \R^2$ we can adjust by an element of $\Lambda$, to write $0$ as 
\begin{equation*}
    0 = \vq + u (\vp_2 - \vp_1)  + v (\vp_3 - \vp_1), \text{ with } 0 \leq u, v \leq 1.
\end{equation*}
Without loss of generality we can assume that $u$ and $v$ instead satisfy $0\leq u, v$  and $u + v \leq 1$. 
Indeed, if this is not the case, one can proceed with the argument by using $-\vq$ instead, and by noting that $B$ is symmetric.

Let $\vr = \vp_1 - \vq$.
If $\vr = 0$, then, since one of the $\vp_i$ is in $B$, we see that one of $$\vq, \vq + (\vp_2 - \vp_1), \vq + (\vp_3 - \vp_1)$$ is also in $B$.
If $\vr \neq 0$, we proceed by writing 
\begin{equation*}
    \vr = \vp_1 + u (\vp_2 - \vp_1) + v(\vp_3 - \vp_1) = (1 - u - v)\vp_1 + u \vp_2 + v \vp_3.
\end{equation*}
By the assumptions on $u, v$, this means that $\vr$ is in the convex hull of $\vp_1, \vp_2, \vp_3$.
\begin{equation*}\label{diagram-Rado}
    \begin{tikzpicture}[scale=2.5, baseline=(current  bounding  box.center)]
    \draw [<->](-2,0) -- (2,0);
    \draw [<->] (0,-1.5) -- (0,1.5);
    
    \draw [thick, blue] (1,0) to [out=90, in=0] (0,1);
    \draw [thick, blue] (0,1) to [out=180, in=90] (-1,0);
    \draw [thick, blue] (-1,0) to [out=270, in=180] (0,-1);
    \draw [thick, blue] (0,-1) to [out=0, in=270] (1,0);
    
    \draw[fill=gray!50] plot[smooth, samples=100, domain=0:1] (\x,1- \x) -| (0,0) -- cycle;
    \fill [red] (0,0) circle[radius=0.03];
    
    \draw [thick, orange, dotted] (-0.5,-0.5) -- (1,0);
     \fill [red] (-0.5,-0.5) circle[radius=0.03];
    \node [left] at (-0.5, -0.5) {\tiny $\vp_1$};
    
    \draw [thick, orange, dotted] (1,0) -- (0,1);
    \fill [red] (1,0) circle[radius=0.03];
    \node [right] at (1,0.1) {\tiny $\vp_2$};
    
    \draw [thick, orange, dotted] (0,1) -- (-0.5,-0.5);
    \fill [red] (0,1) circle[radius=0.03];
    \node [right] at (0,1.1) {\tiny $\vp_3$};
    
    \draw [thick, orange] (-0.6,-1.1) -- (0.9,-0.6);
    \fill [red] (-0.6,-1-.1) circle[radius=0.03];
    \node [left] at (-0.6, -1.12) {\tiny $\vq$};
    
    \draw [thick, orange] (0.9,-0.6) -- (-0.1,0.4);
    \fill [red] (0.9,-0.6) circle[radius=0.03];
    \node [right] at (0.9,-0.7) {\tiny $\vq + (\vp_2-\vp_1)$};
    
    \draw [thick, orange] (-0.1,0.4) -- (-0.6,-1.1);
    \fill [red] (-0.1,0.4) circle[radius=0.03];
    \node [left] at (-0.1,0.5) {\tiny $\vq + (\vp_3-\vp_1)$};

    \fill [red] (0.1,0.6) circle[radius=0.03];
    \node [right] at (0.1,0.6) {\tiny $\vr$};

    \end{tikzpicture}
\end{equation*}
Since the 
latter is contained in the union of convex hulls formed with any two of the $\vp_i$ along with $0$, we may write, using two distinct indices $i, j$, that 
\begin{equation*}\label{Rado-r}
    \vr = s \vp_i + t \vp_j \text{ where }  0\leq s , t \text{ and } 0 < s + t \leq 1.
\end{equation*}
Note the strict inequality which will be used later.
Using this along with the facts
\begin{equation*}
    \vq = \vp_1 - \vr,\ \vq + (\vp_2 - \vp_1) = \vp_2 - \vr,\ \vq + (\vp_3 - \vp_1) = \vp_3 - \vr,
\end{equation*}
we end up with the two containments
\begin{equation}\label{Rado-eqn}
    (1-s)\vp_i + t (-\vp_j) \in \vq + \Lambda\ \text{ and }\
    s (-\vp_i) + (1-t)\vp_j \in \vq + \Lambda.
\end{equation}
Since 
\begin{equation*}
    \big((1-s) + t\big) + \big(s + (1-t)\big) = 2,
\end{equation*}
one of the two points in 
 \eqref{Rado-eqn} must belong to the convex hull of some triple of points $\pm\vp_i, \mp\vp_j$ and $0$.
The conditions on $s, t$, the symmetry of $B$, and the assumption on the segments joining $\vp_i, \vp_j$ implies that the relevant point is in $B$ unless $s$ or $t$ is equal to $1$.
In the latter case, using the assumption that one of $\vp_1, \vp_2, \vp_3$ is in $B$, we can adjust either of the containments in 
 \eqref{Rado-eqn} by an element of $\Lambda$ to show that $\vq \in B + \Lambda$.
This completes the proof.
\end{proof}

At this point in \cite{CR}  the corresponding forms of Lemmata \ref{lemma1-CR} and \ref{Dr.Rado} are used to show 
\equ{equaldets} whenever $B$ is strictly convex.
Then, by approximating arbitrary convex domains by strictly convex ones, one gets the equality of critical determinants without assuming strict convexity. This approach however fails to provide the needed information on the critical locus of $C_B$. Thus
for the proof of Theorem \ref{critical-pieces} we need to make use of additional results of Mahler regarding convex symmetric domains in $\R^2$.

\begin{thm}[\cite{Ca}, \S V.8.3]\label{threepairs}
Let $\Lambda$ be $B$-critical, where $B\subset \R^2$ is a convex symmetric {bounded} domain.
Then one can find three pairs of points $\pm \vp, \pm \vq, \pm \vr$ of the lattice on $\partial B$.
Moreover these three points can be chosen such that
\begin{equation}\label{inscirbedhexagon}
\vp = \vq - \vr
\end{equation}
and any two vectors among  $\vp,  \vq, \vr$ form a basis of $\Lambda.$

Conversely, if $\vp, \vq, \vr$ satisfying 
 {\eqref{inscirbedhexagon}} are on $\partial B$,
then the lattice generated by $\vp$ and $\vq$ is ${B}$-admissible.
Furthermore no additional (excluding the six above) point of $\Lambda$ is on $\partial B$ unless ${B}$ is a parallelogram. 
$\square$
\end{thm}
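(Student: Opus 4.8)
The plan is to prove this the classical way for critical lattices of two–dimensional symmetric convex bodies: a variational argument showing that a $B$–critical lattice admits no small deformation that strictly lowers the covolume while staying admissible, producing the three pairs of boundary points and the relation $\vp=\vq-\vr$; and then an area count on the hexagon spanned by the six points (Pick's theorem) for the uniqueness and for the converse. Throughout $\nu$ denotes the Minkowski norm of $B$, so ``$x\in\partial B$'' means $\nu(x)=1$.

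First I would record two elementary facts: every point of $\Lambda\cap\overline B\setminus\{0\}$ is primitive (a proper integer fraction of it would lie in the open set $B$, contradicting admissibility), and $\Lambda\cap\partial B\ne\varnothing$ (otherwise $\Lambda$ avoids the compact set $\overline B$ with room to spare, so a slight contraction of $\Lambda$ stays $B$–admissible, contradicting minimality of $d(\Lambda)=\Delta(B)$). Next, fix a primitive $\vp\in\Lambda\cap\partial B$ and write $\Lambda=\Z\vp+\Z\vs$. The covolume of a neighbouring lattice $\Z\vp+\Z\vs'$ equals $|\vp|\cdot\dist(\vs',\R\vp)$, so it is linear in the single parameter measuring the transverse component of $\vs'$; since $\Lambda$ is globally critical, moving $\vs$ toward the line $\R\vp$—which strictly decreases the covolume—must be blocked immediately, i.e.\ some lattice point $\vq':=m\vs+n\vp$ with $m\ne0$ already lies on $\partial B$. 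Thus $\Lambda$ has a second, non‑proportional boundary point; if $|m|=1$ then $\{\vp,\vq'\}$ is a basis, while if $|m|\ge2$ one reduces a suitable lattice point into the parallelogram spanned by $\vp,\vq'$ and finds either a nonzero point of $\Lambda$ in $B$ (impossible) or a flat segment of $\partial B$ carrying collinear lattice points, which forces $B$ to be a parallelogram. So, for $B$ not a parallelogram, we get a basis $\{\vp,\vq\}$ of $\Lambda$ with $\vp,\vq\in\partial B$. Now put $\vr:=\vq-\vp\in\Lambda$; admissibility gives $\nu(\vr)\ge1$, and the crux is equality. If $\nu(\vr)>1$ (the equality case, like $\nu(\vq+\vp)=1$, feeding into the hexagon count below), then a first‑order shear $\vq\mapsto\vq+t\vw$ keeping $\vp$ fixed can be chosen both to decrease $\bigl|\det[\vp,\vq+t\vw]\bigr|$ and to keep $\vq+t\vw$ outside $\overline B$ unless the supporting line of $B$ at $\vq$ is parallel to $\vp$; the symmetric statement at $\vp$ forces its supporting line parallel to $\vq$. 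Together these place $B$ inside the parallelogram $P$ whose sides are parallel to $\vp,\vq$ and pass through $\pm\vp,\pm\vq$, and a further deformation exploiting that such a $B$ misses the vertex $\vr$ of $P$ yields a $B$–admissible lattice of covolume below $\Delta(B)$ unless $B=P$. Hence $\vr\in\partial B$ and $\vp=\vq-\vr$; any two of $\vp,\vq,\vr$ form a basis since $\det[\vq,\vr]=\det[\vq,\vq-\vp]=-\det[\vq,\vp]=\pm1$, and similarly for the remaining pair.

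For uniqueness and the converse I would use the hexagon $H:=\operatorname{conv}\{\pm\vp,\pm\vq,\pm\vr\}$. Splitting $H$ into six triangles with common vertex $0$, each has area $\tfrac12|\det[\vp,\vq]|=\tfrac12 d(\Lambda)$, so $\area(H)=3d(\Lambda)$; applying Pick's theorem in coordinates where $\Lambda=\Z^2$ shows the only lattice points of $\Lambda$ in $\overline H$ are $0$ and the six vertices. If $\Lambda$ had a seventh point on $\partial B$, it would lie outside $H$, so $B\not\subseteq H$, and $B$ would poke past some edge, say $[\vp,\vq]$; since the hexagon $(\vp+\vq)+H$ abuts $H$ along $[\vp,\vq]$ and has center $\vp+\vq$, convexity, central symmetry and admissibility then force a nonzero point of $\Lambda$ into $B$—the only alternative being that $B$ crosses along a flat segment of $\partial B$, which pins $B=P$, a parallelogram. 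The converse is the same computation run forwards: if $\vp,\vq,\vr$ with $\vp=\vq-\vr$ lie on $\partial B$ then $\{\vp,\vq\}$ is a basis, the supporting lines of $B$ at the six points cut out a region contained in $H$, whence $B\subseteq H$ and $(\Z\vp+\Z\vq)\cap B=\{0\}$. The hard part will be the deformation step establishing $\vr\in\partial B$—that is, ruling out, for a critical $\Lambda$, the configuration in which $B$ sits strictly inside the parallelogram $P$ away from its vertices—together with the careful treatment of flat boundary segments, which is precisely what produces the parallelogram exception.
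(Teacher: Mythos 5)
The paper does not give its own proof of this theorem: it is cited directly from Cassels, \S V.8.3, and the statement terminates with $\square$. So your attempt has to be judged on its own merits, not against an in-paper argument.

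Your plan (a variational argument to produce the three pairs of boundary points, then a Pick-type area count on the hexagon for uniqueness and the converse) is the right classical strategy, and you are correct that the step placing $\vr=\vq-\vp$ on $\partial B$ and producing the parallelogram exception is the delicate one; as written that step is only a sketch. But there is a definite error in the converse. You assert that the supporting lines of $B$ at the six points cut out a region contained in $H:=\operatorname{conv}\{\pm\vp,\pm\vq,\pm\vr\}$, hence $B\subseteq H$. The region bounded by supporting lines of $B$ at points of $\partial B$ \emph{contains} $B$ and also contains $H$; it is never contained in $H$ except in degenerate cases. More to the point, $B\subseteq H$ is simply false in the model example: for $B$ the open unit disc with $\vp=(1,0)$, $\vq=(1/2,\sqrt3/2)$, $\vr=\vq-\vp$, the hexagon $H$ is inscribed in the circle and $B$ pokes past all six of its edges. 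A correct proof that $\Z\vp+\Z\vq$ is $B$-admissible should instead use the three representations
\[
m\vp+n\vq=(m+n)\vp+n\vr=(m+n)\vq-m\vr,
\]
choosing whichever has coefficients of convenient sign, and apply the bound $\nu(a\vu+b\vv)\ge\bigl||a|-|b|\bigr|$ for unit vectors $\vu,\vv$, handling the remaining case of equal coefficients via $\nu(\vp+\vq)=\nu(2\vq-\vr)\ge 2\nu(\vq)-\nu(\vr)=1$. The same false inclusion $B\subseteq H$ also undermines your ``no seventh point'' step: since the disc already has $B\not\subseteq H$ without any extra lattice point on $\partial B$, the fact that $B$ pokes past an edge of $H$ cannot by itself force a lattice point into $B$; the argument must be run on the hypothetical seventh lattice point $\vs$ itself (e.g.\ locating $\vs$ relative to the chord it lies beyond and exploiting that $\vs\mp\vp,\vs\mp\vq$ are again in $\Lambda$), not merely on the shape of $B$.
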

A convex symmetric {bounded} domain in $\R^n$ is said to be \textit{irreducible} if every properly contained convex symmetric domain has smaller critical determinant.
We make use of the following lemmata concerning irreducible domains in $\R^2$.
\begin{lem}[\cite{ma2}, Lemmata 5 and 9]\label{irredcritset}
Assume ${B}\subset \R^2$ is not a parallelogram and is irreducible.  {Then:
\begin{itemize}
\item[\rm (i)] for each $\vp \in 
\partial {B}$ 
there is exactly one ${B}$-critical lattice 
containing $\vp$;
\item[\rm (ii)] for each ${B}$-critical lattice $\Lambda$ and each $\vq,\vr \in 
\partial {B}\cap \Lambda$, all points of the line segment between $\vq,\vr$ different from $\vq,\vr$ are interior points of $B$. $\square$
\end{itemize}}
\end{lem}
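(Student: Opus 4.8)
The plan is to obtain the existence statement in (i) from a ``denting'' argument built on Mahler's compactness criterion, and to treat the uniqueness in (i) together with statement (ii) via a deformation of critical lattices controlled by Theorem~\ref{threepairs}. Call a convex symmetric bounded domain $B'$ a \emph{dent} of $B$ if $B'=B\cap H\cap(-H)$ for an open half-plane $H$: then $B'\subseteq B$ and every $B$-admissible lattice is automatically $B'$-admissible, so $\Delta(B')\le\Delta(B)$, and if moreover $B'\subsetneq B$ with $\Delta(B')=\Delta(B)$ then $B$ is not irreducible. Since $B$ is not a parallelogram, Theorem~\ref{threepairs} shows that each $\Lambda\in\mathcal L(B)$ meets $\partial B$ in exactly six points $\pm\vp_1,\pm\vp_2,\pm\vp_3$, with $\vp_1=\vp_2-\vp_3$ and any two of $\vp_1,\vp_2,\vp_3$ a basis of $\Lambda$; this is used throughout.

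\emph{Existence in (i).} Suppose $\vp\in\partial B$ lies on no $B$-critical lattice. As $\mathcal L(B)$ is compact, $K:=\bigcup_{\Lambda\in\mathcal L(B)}(\Lambda\cap\partial B)$ is a compact subset of $\partial B$ not containing $\vp$. Fix a neighbourhood $U$ of $\{\vp,-\vp\}$ in $\partial B$ with $U\cap K=\varnothing$, and a family of dents $B_\varepsilon\subsetneq B$, increasing as $\varepsilon\downarrow0$, with $B\setminus B_\varepsilon\subseteq U$ and $B\setminus B_\varepsilon$ shrinking to $\{\vp,-\vp\}$ as $\varepsilon\to0$. It suffices to find $\varepsilon$ with $\Delta(B_\varepsilon)=\Delta(B)$. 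If there is none, pick $B_\varepsilon$-critical lattices $\Lambda_\varepsilon$, so $d(\Lambda_\varepsilon)<\Delta(B)$; being admissible for a fixed dent, their covolumes lie in a compact subinterval of $(0,\Delta(B))$, so along a sequence they converge (Mahler) to a lattice $\Lambda_\infty$ with $d(\Lambda_\infty)\le\Delta(B)$. Each $\Lambda_\varepsilon$ is not $B$-admissible but is $B_\varepsilon$-admissible, hence has a nonzero point in $B\setminus B_\varepsilon\subseteq U$; as $\varepsilon\to0$ these points accumulate only at $\{\vp,-\vp\}$, so $\vp\in\Lambda_\infty$. Also $\Lambda_\infty$ is $B$-admissible: a nonzero point of $\Lambda_\infty$ in the open set $B$ would stay a positive distance from $\{\vp,-\vp\}$ and hence lie in $B_\varepsilon$ for all small $\varepsilon$, contradicting admissibility of the $\Lambda_\varepsilon$'s. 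Thus $\Lambda_\infty\in\mathcal L(B)$ contains $\vp$, a contradiction; so some dent realizes $\Delta(B_\varepsilon)=\Delta(B)$ and $B$ is not irreducible.

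\emph{(ii) implies uniqueness in (i).} Assume (ii). If $\Lambda\ne\Lambda'$ are $B$-critical, both containing $\vp\in\partial B$, then by Theorem~\ref{threepairs} (after relabeling so $\vp=\vp_1$) $\partial B\cap\Lambda=\{\pm\vp,\pm\vq,\pm\vr\}$ and $\partial B\cap\Lambda'=\{\pm\vp,\pm\vq',\pm\vr'\}$ with $\vr=\vq-\vp$, $\vr'=\vq'-\vp$ and $\{\vp,\vq\},\{\vp,\vq'\}$ bases. By (ii) applied to the pair $\vq,\vr$, the open segment $(\vq,\vr)$ lies in the interior of $B$, so the line $L$ through $\vq,\vr$ is not a supporting line of $B$ and therefore $\overline B\cap L=[\vr,\vq]$, a chord of length $\|\vp\|$ in direction $\vp$ with both endpoints on $\partial B$. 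Now the length $\ell$ of $\overline B\cap(\text{line in direction }\vp)$, as a function of the line, is concave and symmetric under $x\mapsto-x$; its maximum $2\|\vp\|$ is attained on the central chord $[-\vp,\vp]$ (and possibly on a subinterval of lines near it), and since $\|\vp\|<2\|\vp\|$ there are exactly two lines carrying a chord of length $\|\vp\|$, interchanged by $x\mapsto-x$. As $[\vr,\vq]$, $[-\vr,-\vq]$ (the two distinct chords just described), and — by the same argument via (ii) — $[\vr',\vq']$ all realize length $\|\vp\|$, we get $[\vr',\vq']=[\vr,\vq]$ or $[\vr',\vq']=[-\vr,-\vq]$; matching the endpoint exceeding its partner by $\vp$ (namely $\vq$, resp.\ $\vp-\vq$) forces $\vq'=\vq$ or $\vq'=\vp-\vq$, and in either case $\Lambda'=\Z\vp+\Z\vq'=\Z\vp+\Z\vq=\Lambda$, a contradiction.

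\emph{Statement (ii), and the main obstacle.} Let $\Lambda$ be $B$-critical, $\vq,\vr\in\partial B\cap\Lambda$, and suppose some point of $(\vq,\vr)$ lies on $\partial B$; by convexity and a supporting-line argument, $[\vq,\vr]$ then lies in $\partial B$. If $\vr=-\vq$ this is impossible, since $(\vq,\vr)$ runs through $0\in B$ and every point of it is a convex combination of $0$ and $\pm\vq$ with positive weight on $0$, hence interior. So $\vr\ne\pm\vq$, and by Theorem~\ref{threepairs} we relabel so that $\vq=\vp_2$, $\vr=\vp_3$, $\vp_1:=\vp_2-\vp_3\in\partial B$. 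For $s\in\R$ put $\vp_2^{(s)}:=\vp_2+s\vp_1$, $\vp_3^{(s)}:=\vp_3+s\vp_1$, so $\vp_2^{(s)}-\vp_3^{(s)}=\vp_1$; writing $\vp_2^{(s)}=(1+s)\vp_2-s\vp_3$ and $\vp_3^{(s)}=s\vp_2+(1-s)\vp_3$ and noting that the change of basis from $(\vp_2,\vp_3)$ has determinant $(1+s)(1-s)+s^2=1$, we get $d\big(\Z\vp_2^{(s)}+\Z\vp_3^{(s)}\big)=d(\Lambda)=\Delta(B)$. Whenever $\vp_2^{(s)},\vp_3^{(s)}\in[\vp_2,\vp_3]\subseteq\partial B$, the converse part of Theorem~\ref{threepairs} shows $\Lambda^{(s)}:=\Z\vp_2^{(s)}+\Z\vp_3^{(s)}=\Z\vp_1+\Z\vp_2^{(s)}$ is $B$-admissible, hence $B$-critical, and contains $\vp_1$. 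If $[\vp_2,\vp_3]$ lies in a strictly longer line segment of $\partial B$, this holds for $s$ in a nondegenerate interval and produces a one-parameter family of distinct $B$-critical lattices through $\vp_1$, all of whose boundary points are confined to the proper sub-arc $\{\pm\vp_1\}\cup[\vp_2,\vp_3]\cup[-\vp_2,-\vp_3]$ of $\partial B$; otherwise $[\vp_2,\vp_3]$ is itself a maximal segment of $\partial B$ with corners $\vp_2,\vp_3$. Ruling out both configurations is the crux: it requires a precise description of how the critical locus $\mathcal L(B)$ is attached to the edges and corners of $\partial B$ and how the deformation above interacts with existence in (i) — for instance, the confinement of a whole one-parameter family of critical lattices to a proper sub-arc ought, together with existence in (i), to force a proper dent of $B$ with unchanged critical determinant. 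This analysis is exactly the content of \cite[Lemmata 5 and 9]{ma2}, and is the step I expect to be the main obstacle; everything else above is soft.
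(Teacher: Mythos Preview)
The paper does not prove this lemma at all: it is quoted from Mahler \cite{ma2} and closed with a bare $\square$. So there is no argument in the paper to compare against, and your sketch already goes well beyond what the paper provides.

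Your existence argument for (i) and your reduction ``(ii) $\Rightarrow$ uniqueness in (i)'' via the concave chord-length function are both correct and pleasant. Two small writeup issues in the existence part: the neighbourhood $U$ must be taken in $\R^2$, not in $\partial B$, since $B\setminus B_\varepsilon$ is an open subset of $B$ and hence disjoint from $\partial B$; and the $B$-admissibility of $\Lambda_\infty$ should be argued through nearby points of $\Lambda_\varepsilon$ rather than through the point of $\Lambda_\infty$ itself (pick $\vv_\varepsilon\in\Lambda_\varepsilon$ with $\vv_\varepsilon\to\vv$; then $\vv_\varepsilon\in B\setminus B_\varepsilon$ for small $\varepsilon$, forcing $\vv_\varepsilon$ toward $\{\pm\vp\}$, a contradiction). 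In the reduction, your relabelling ``$\vq=\vp_2,\ \vr=\vp_3$'' silently assumes $\vq-\vr\in\partial B$, i.e.\ that $\vq,\vr$ are adjacent vertices of the inscribed hexagon; if they are not, the straight arc $[\vq,\vr]\subset\partial B$ contains an intermediate hexagon vertex and one restarts from an adjacent pair --- worth a sentence.

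Your assessment of (ii) is honest and accurate: the deformation you set up is the right one, and you correctly isolate the two residual configurations (a maximal edge with hexagon vertices at its endpoints, or a one-parameter family of critical lattices whose boundary points are confined to a proper sub-arc). Ruling these out is exactly where Mahler's structural analysis of irreducible domains enters, and there is no soft shortcut visible from what you have written. In particular, the tempting circular move --- using the produced family through $\vp_1$ to contradict uniqueness in (i) --- is blocked because your uniqueness argument already presupposes (ii). So the proposal is a clean reduction of the whole lemma to the content of \cite[Lemma 9]{ma2}, which is more than the paper itself offers, but it is not a self-contained proof.
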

Non-parallelogram irreducible domains also have continuously differentiable boundaries.
\begin{lem}[\cite{ma3}, Theorem 3]\label{C1-boundary}
For $B\subset \R^2$ not a parallelogram and irreducible, the boundary $\partial B$ is a $\mathcal{C}^1$ submanifold of $\R^2$. $\square$
\end{lem}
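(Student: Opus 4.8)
The plan is to show that $\partial B$ admits a unique supporting line at each of its points; for a planar convex body this is equivalent to $\partial B$ being a $\mathcal{C}^1$ submanifold (and, dually, to strict convexity of the polar body $B^\circ$). Suppose instead that $B$ has a corner at some $\vp_0\in\partial B$, and hence by symmetry at $-\vp_0$: at least two distinct lines support $B$ at $\vp_0$. Choosing coordinates so that some supporting direction strictly inside this wedge is horizontal, we may assume $\vp_0=(1,0)$ and that $(1,0)$ is the \emph{unique} point of $\overline B$ with $x_1=1$. For small $\varepsilon>0$ put $B_\varepsilon:=B\cap\{|x_1|<1-\varepsilon\}$, a convex symmetric bounded domain properly contained in $B$ for which $B\setminus B_\varepsilon$ consists of two ``corner caps'' shrinking to $\{\pm\vp_0\}$ as $\varepsilon\to0$. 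Since $B$ is irreducible, $\Delta(B_\varepsilon)<\Delta(B)$ for every $\varepsilon>0$; I aim to contradict this.

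Suppose $\Delta(B_{\varepsilon_k})<\Delta(B)$ along $\varepsilon_k\downarrow0$, and pick a $B_{\varepsilon_k}$-critical lattice $\Lambda_k$, so $d(\Lambda_k)=\Delta(B_{\varepsilon_k})<\Delta(B)$. Then $\Lambda_k$ is not $B$-admissible, so it has a nonzero point $\vp_k\in B\setminus B_{\varepsilon_k}$, which after reflecting $\Lambda_k$ if necessary lies in the cap around $\vp_0$, so $\vp_k\to\vp_0$. The determinants $d(\Lambda_k)$ lie in $[\Delta(B_{\varepsilon_1}),\Delta(B)]$, the $\vp_k$ are bounded, and $\Lambda_k\cap B_{\varepsilon_k}=\{0\}$ bounds the shortest vector of $\Lambda_k$ below; by Mahler compactness a subsequence converges, $\Lambda_k\to\Lambda_0$, with $d(\Lambda_0)=\lim d(\Lambda_k)\le\Delta(B)$. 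As $B_{\varepsilon_k}$ exhausts $B$, a point of $\Lambda_0$ in the open set $B$ would force $\Lambda_k\cap B_{\varepsilon_k}\ne\{0\}$ for large $k$; hence $\Lambda_0\cap B=\{0\}$, so $\Lambda_0$ is $B$-admissible and $d(\Lambda_0)=\Delta(B)$, i.e.\ $\Lambda_0$ is $B$-critical. Moreover $\vp_0=\lim\vp_k\in\Lambda_0\cap\partial B$, so Lemma~\ref{irredcritset}(i) identifies $\Lambda_0$ as \emph{the} $B$-critical lattice through $\vp_0$.

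It remains to derive a contradiction from the rigidity of $\Lambda_0$ near the corner. By Theorem~\ref{threepairs} the set $\Lambda_0\cap\partial B$ is exactly $\{\pm\vp_0,\pm\vq_0,\pm\vr_0\}$ with $\vp_0=\vq_0-\vr_0$, any two of $\vp_0,\vq_0,\vr_0$ a basis, and all other nonzero points of $\Lambda_0$ outside $\overline B$; by Lemma~\ref{irredcritset}(ii) the open chords between these six points lie in $\operatorname{int}B$. Hence for large $k$ the relevant points of $\Lambda_k$ are $\vp_k$ (in the cap, so strictly inside $B$), points $\vq_k,\vr_k$ near $\vq_0,\vr_0$ with $\vp_k=\vq_k-\vr_k$, and nothing else in $\overline{B_{\varepsilon_k}}$; since $B_{\varepsilon_k}$ and $B$ agree near $\vq_0,\vr_0$, admissibility of $\Lambda_k$ forces $\vq_k,\vr_k\notin B$. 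Thus the centrally symmetric hexagon on $\pm\vp_k,\pm\vq_k,\pm\vr_k$ has area $3\,d(\Lambda_k)$, strictly below the area $3\Delta(B)$ of the inscribed hexagon of $\Lambda_0$, with one opposite pair of vertices pushed into $B$ and the remaining four pushed out. The expected contradiction is that this trade-off is impossible: once a supporting direction at $\vp_0$ is fixed, displacing $\pm\vp_0$ inward while preserving $B_{\varepsilon_k}$-admissibility is possible only by displacing $\pm\vq_0,\pm\vr_0$ outward in a way that keeps the hexagon area (equivalently $d(\Lambda_k)$) at least $3\Delta(B)$, and this fails precisely because at a corner the admissible inward displacements at $\vp_0$ are ``too wide.'' Turning this into a proof -- i.e.\ showing that no $B_{\varepsilon_k}$-admissible lattice clustering at $\Lambda_0$ can have determinant below $\Delta(B)$ -- is the main obstacle, and is essentially Mahler's argument in \cite{ma3}; the only extra bookkeeping is to pick $\vp_0$ among the at most countably many corners of $\partial B$ so that the partners $\vq_0,\vr_0$ are not themselves corners, which is possible because $\vp\mapsto(\vq_0(\vp),\vr_0(\vp))$ is continuous. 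An equivalent and perhaps cleaner route is to differentiate, in $\vp\in\partial B$, the one-parameter family of critical lattices furnished by Lemma~\ref{irredcritset}(i): the relations $\vp=\vq_0(\vp)-\vr_0(\vp)$, $\vp\wedge\vq_0(\vp)=\Delta(B)$, $\vq_0(\vp),\vr_0(\vp)\in\partial B$ then pin down the tangent to $\partial B$ at $\vp_0$, contradicting the corner.
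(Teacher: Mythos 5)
The paper offers no proof here; the $\square$ marks the statement as a direct citation of Mahler's Theorem~3 in \cite{ma3}. Your proposal attempts an honest proof, and the preparatory work is sound: translating $\mathcal{C}^1$ into uniqueness of supporting lines, cutting off the hypothetical corner with the slab $B_\varepsilon$, invoking irreducibility to get $\Delta(B_\varepsilon)<\Delta(B)$, extracting a convergent subsequence of critical lattices $\Lambda_k\to\Lambda_0$ via Mahler compactness, identifying $\Lambda_0$ via Lemma~\ref{irredcritset}(i) as the unique critical lattice through $\vp_0$, and observing that the hexagon vertex $\vp_k$ falls strictly inside $B$ while $\vq_k,\vr_k$ are forced outside.

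However, the decisive step is absent, and you say so yourself: showing that the hexagon on $\pm\vp_k,\pm\vq_k,\pm\vr_k$ cannot have area below $3\Delta(B)$ is ``the main obstacle, \dots essentially Mahler's argument.'' That step \emph{is} the theorem. Observe that everything you actually prove is insensitive to the corner hypothesis: if $\partial B$ were $\mathcal{C}^1$ at $\vp_0$, the same slab construction, the same compactness, the same limit $\Lambda_0$, and the same inward/outward picture of the hexagon vertices would all go through, and no contradiction would (or should) arise. So the corner must enter precisely in the inequality you leave unproved, and the intuition that ``inward displacements at a corner are too wide'' is exactly the thing needing quantification. The alternative differentiation route has a circular flavor: differentiating $\vp\mapsto\big(\vq_0(\vp),\vr_0(\vp)\big)$ presupposes a regularity of $\partial B$ and of the critical family that is close to what the lemma asserts, and the relations listed do not by themselves produce a well-defined tangent at a corner. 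As written, the proposal is a correct and useful scaffolding for Mahler's proof, not a proof.
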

Given two critical lattices of such a domain we have the following configuration of their points:
\begin{lem}[\cite{ma2}, Lemma $6$]\label{configuration-of-points}
{Suppose that} $B$ is not a parallelogram, $\Lambda$ is $B$-critical, and let $\vp_i: i = {1, \dots, 6}$ be the points of $\Lambda$ 
contained in  $\partial B$, labelled in a counter-clockwise order.
Let $A_i$ denote the open segment of $\partial B$ between $\vp_i$ and $\vp_{i+1}$.
If $\Lambda'$ is another $B$-critical lattice distinct from $\Lambda$, then each $A_i$ contains exactly one point of $\Lambda'$. $\square$
\end{lem}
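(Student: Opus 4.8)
The plan is to encode each $B$-critical lattice by its inscribed hexagon of boundary points, to observe that all of these hexagons have one and the same area $3\Delta(B)$, and then to show that two distinct such hexagons must interlace along $\partial B$ by means of a cap-area comparison. After a routine reduction one may assume $B$ is irreducible, so that Lemmas~\ref{irredcritset} and~\ref{C1-boundary} apply: one passes to an irreducible convex symmetric $B_0\subseteq B$ with $\Delta(B_0)=\Delta(B)$, checks the standard facts that $B_0$ can be taken non-parallelogram and that every $B$-critical lattice is $B_0$-critical with its six $\partial B$-points already lying on $\partial B_0$, and notes that the statement for $B$ then follows from that for $B_0$. Thus from now on $\partial B$ is a $\mathcal C^1$ convex curve, each boundary point lies on exactly one $B$-critical lattice, and the open chord joining two boundary points of a critical lattice lies in $\operatorname{int}B$.

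By Theorem~\ref{threepairs} and the non-parallelogram hypothesis, $\Lambda\cap\partial B=\{\vp_1,\dots,\vp_6\}$ consists of exactly six points, equal to $\{\pm\vp,\pm\vq,\pm\vr\}$ for vectors with $\vp=\vq-\vr$ as in \eqref{inscirbedhexagon}; being six distinct points on the convex curve $\partial B$, they are the vertices, in the given cyclic order, of a centrally symmetric inscribed hexagon $H$. Taking $\vq,\vr$ as a basis of $\Lambda$ and using $\vp=\vq-\vr$, a direct computation gives
\[
\area(H)=3\,d(\Lambda)=3\Delta(B),
\]
and likewise $\Lambda'$ gives an inscribed centrally symmetric hexagon $H'$ with $\area(H')=3\Delta(B)$. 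Since $\Lambda\neq\Lambda'$, Lemma~\ref{irredcritset}(i) shows that $\Lambda$ and $\Lambda'$ share no point of $\partial B$; in particular no vertex of $H'$ is one of the $\vp_i$.

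Now suppose, for contradiction, that some open arc $A_i$ (between $\vp_i$ and $\vp_{i+1}$, indices mod $6$) contains two vertices of $\Lambda'$; choose two of them, $\vp'$ and $\vp''$, that are adjacent along $A_i$, so that $[\vp',\vp'']$ is a side of $H'$ and, by central symmetry, $[-\vp',-\vp'']$ is the opposite side, with $-\vp',-\vp''\in A_{i+3}$. Since $\vp'$ and $\vp''$ lie on the sub-arc of $\partial B$ from $\vp_i$ to $\vp_{i+1}$, the chord $[\vp',\vp'']$ lies inside the cap of $B$ bounded by the side $[\vp_i,\vp_{i+1}]$ of $H$ and that sub-arc; hence the cap of $B$ cut off by $[\vp',\vp'']$ is properly contained in the cap cut off by $[\vp_i,\vp_{i+1}]$, the loss coming from the two pieces near $\vp_i$ and near $\vp_{i+1}$, and similarly for the antipodal caps. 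Writing $\area(H)=\area(B)-\sum(\text{caps of }H)$ and likewise for $H'$, and comparing the remaining four caps of $H'$ with the remaining four of $H$ arc by arc (here the $\mathcal C^1$ smoothness of $\partial B$ ensures cap areas vary monotonically with their chords), one would get $\area(H')>\area(H)=3\Delta(B)$, contradicting $\area(H')=3\Delta(B)$. Therefore the six vertices of $\Lambda'$ lie in six \emph{distinct} open arcs among $A_1,\dots,A_6$, and since there are six of each, each $A_i$ contains exactly one vertex of $\Lambda'$.

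The delicate point, and the one I expect to require real work, is the cap-area comparison in the last paragraph: one must show that driving two consecutive vertices of $H'$ into a single arc of $\partial B$ produces a \emph{net} loss of area \emph{uniformly} over the positions of the remaining antipodal pair of vertices of $H'$. Controlling this means tracking exactly which vertex of $H'$ lies in which arc determined by $\Lambda$ and ruling out the configurations in which an apparent gain on the ``other'' caps could offset the loss; this is essentially Mahler's original analysis in \cite{ma2}, and the reduction to an irreducible domain together with the $\mathcal C^1$ smoothness of $\partial B$ (Lemma~\ref{C1-boundary}) are exactly what make the comparison go through without boundary pathologies.
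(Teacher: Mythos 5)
The paper does not actually prove this lemma; it is cited from Mahler's paper \cite{ma2} and marked with $\square$, so there is no in-paper argument to compare against. Evaluating your proposal on its own terms: the reduction to an irreducible $B$, the identification $\area(H)=3\Delta(B)$ for the inscribed hexagon of a critical lattice (a direct consequence of $\vp=\vq-\vr$), and the use of Lemma~\ref{irredcritset}(i) to rule out shared boundary points are all correct. However, the decisive step---the cap-area comparison---does not work as you describe, and you rightly flag it as ``the delicate point.'' The problem is that the caps of $H'$ are not in any ``arc by arc'' correspondence with the caps of $H$. If two vertices of $H'$ lie in $A_i$ (hence, by central symmetry, two in $A_{i+3}$) and the remaining antipodal pair lies in $A_j$ and $A_{j+3}$, then two of the six arcs of $\partial B$ contain \emph{no} vertices of $H'$. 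Consequently $H'$ has sides spanning two or more of the $\vp_k$, cutting off caps that are \emph{larger} than the corresponding caps of $H$, while the sides inside $A_i$ and $A_{i+3}$ cut off caps that are smaller. There is no monotone pairing to drive the inequality $\area(H')>\area(H)$; as written, the comparison could go either way, so the contradiction is not obtained.

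The argument that does close the gap (and is much closer in spirit to Mahler's treatment of irreducible domains) is a continuity/trapping argument using Lemma~\ref{irredcritset}(i) more fully. By that lemma every point $\vp\in\partial B$ lies on a unique critical lattice $\Lambda(\vp)$, and this assignment is continuous (uniqueness plus Mahler compactness). Let $\vp$ traverse the closed arc $\overline{A_1}$ from $\vp_1$ to $\vp_2$; at both endpoints $\Lambda(\vp)=\Lambda$. For $\vp$ in the open arc $A_1$ one has $\vp\notin\Lambda$, hence $\Lambda(\vp)\neq\Lambda$, and therefore no boundary point of $\Lambda(\vp)$ can coincide with any $\vp_k$ (that would force $\Lambda(\vp)=\Lambda$ by uniqueness). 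By continuity the six boundary points of $\Lambda(\vp)$, which start at $\vp_1,\dots,\vp_6$ and end at $\vp_2,\dots,\vp_6,\vp_1$, are each trapped in the corresponding open arc $A_k$ for $\vp\in A_1$. Since $\Lambda'$ lies in this one-parameter family (every critical lattice arises as some $\Lambda(\vp)$ with $\vp\in\overline{A_1}$), this gives exactly one boundary point of $\Lambda'$ in each $A_k$. I would recommend replacing the cap-area step with this deformation argument, which uses the uniqueness half of Lemma~\ref{irredcritset} in an essential way and avoids the area comparison entirely.
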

\ignore{Fixing one critical lattice, this Lemma allows us to parameterize the critical locus of an irreducible $B$ by traversing along the boundary $\partial B$ from one lattice point to another (cf.\ \cite[Corollary 7.5]{KR}).
We also use the result of Mahler that every convex symmetric domain contains an irreducible domain with the same critical determinant:}
\begin{lem}[\cite{ma2}, Theorem 1]\label{contains-irred}
Every convex symmetric {bounded} domain $B\subset \R^2$ contains an irreducible $D$ with $\Delta(D) = \Delta(B).$
 $\square$
\end{lem}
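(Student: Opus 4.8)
The plan is to obtain $D$ as an inclusion‑minimal member of the family
$$\mathcal F:=\{\,E\subseteq B:\ E\text{ a convex symmetric bounded domain},\ \Delta(E)=\Delta(B)\,\},$$
and to invoke Zorn's lemma. The family is non‑empty ($B\in\mathcal F$), and any inclusion‑minimal $D\in\mathcal F$ is automatically irreducible: if $E\subsetneq D$ is a convex symmetric (bounded) domain then $E\notin\mathcal F$ by minimality, while $\Delta(E)\le\Delta(D)=\Delta(B)$ because every $D$‑admissible lattice is $E$‑admissible (monotonicity of the critical determinant under inclusion); hence $\Delta(E)<\Delta(D)$, which is exactly the definition of irreducibility. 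So it suffices to check that every chain in $(\mathcal F,\subseteq)$ has a lower bound lying in $\mathcal F$.

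Let $\{D_\alpha\}$ be such a chain, that is, a decreasing family of convex symmetric bounded domains, each inside $B$ and each with critical determinant $\Delta(B)$. Since they all lie inside the bounded set $B$, the closed convex symmetric set $\overline{D}_\infty:=\bigcap_\alpha\overline{D_\alpha}$ is non‑empty, the net $\overline{D_\alpha}$ converges to $\overline{D}_\infty$ in the Hausdorff metric, and $D_\infty:=\operatorname{int}\bigl(\overline{D}_\infty\bigr)\subseteq D_\alpha$ for every $\alpha$. I claim $D_\infty\in\mathcal F$, which then exhibits it as the required lower bound and finishes the argument by Zorn's lemma.

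The claim has two parts. First, $D_\infty$ is genuinely two‑dimensional: were $\overline{D}_\infty$ contained in a line, then for every $\varepsilon>0$ all sufficiently advanced $D_\alpha$ would be contained in a rectangle with one side $\le\varepsilon$ and the other $\le\operatorname{diam}(B)$, whose critical determinant equals $\tfrac14$ of its area and is therefore $\le\tfrac14\varepsilon\operatorname{diam}(B)$; this would force $\Delta(D_\alpha)\to 0$, contradicting $\Delta(D_\alpha)=\Delta(B)>0$. (This is the one place the boundedness of $B$ is used.) Second, knowing now that $D_\infty$ contains some ball centred at the origin, the Hausdorff convergence gives $D_\alpha\subseteq(1+\varepsilon_\alpha)\overline{D}_\infty$ for a sequence $\varepsilon_\alpha\to 0$, hence $\Delta(B)=\Delta(D_\alpha)\le\Delta\bigl((1+\varepsilon_\alpha)\overline{D}_\infty\bigr)=(1+\varepsilon_\alpha)^2\,\Delta(D_\infty)$; combining this with $\Delta(D_\infty)\le\Delta(D_\alpha)=\Delta(B)$ (monotonicity, since $D_\infty\subseteq D_\alpha$) yields $\Delta(D_\infty)=\Delta(B)$.

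This is purely a compactness/selection argument, so I do not expect a serious obstacle; the only points needing care are the standard ones — that a decreasing net of convex bodies inside a fixed bounded set converges in the Hausdorff metric to the non‑empty intersection of their closures, that the critical determinant is monotone and satisfies $\Delta(\lambda E)=\lambda^{2}\Delta(E)$ under dilation in $\mathbb R^2$, and that the critical determinant of a parallelogram is a quarter of its area, so that degeneration of the limiting body is incompatible with keeping $\Delta$ bounded away from $0$. That last quantitative input is precisely what makes the boundedness hypothesis on $B$ essential. Alternatively one could simply cite Mahler's more hands‑on treatment in \cite{ma2} (see also \cite{Ca}).
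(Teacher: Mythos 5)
Your proof is correct. The paper invokes Mahler's theorem without reproducing its proof, so there is no in-paper argument to compare against, but your approach — Zorn's lemma on the family of convex symmetric subdomains of $B$ with critical determinant $\Delta(B)$, chains handled via the fact that a decreasing net of compact convex symmetric subsets of the bounded set $B$ Hausdorff-converges to the intersection of closures, and degeneracy of the limit excluded because $\Delta(\text{parallelogram})=\tfrac14\,\text{area}\to 0$ as the body flattens — is the standard compactness (``Mahler selection'') argument and, as far as I recall, is close in spirit to Mahler's own proof in \cite{ma2}. Every step checks out: a minimal element of $\mathcal F$ is irreducible because any proper convex symmetric subdomain $E\subsetneq D$ lies in $B$, hence is eligible for membership in $\mathcal F$, hence must have $\Delta(E)<\Delta(D)$ by minimality and monotonicity; the limit $D_\infty$ is a genuine lower bound in $\mathcal F$ since $\operatorname{int}(\overline{D_\alpha})=D_\alpha$ for open convex $D_\alpha$; and the pinching $\Delta(B)\le(1+\varepsilon_\alpha)^2\Delta(D_\infty)\le(1+\varepsilon_\alpha)^2\Delta(B)$ gives the desired equality.

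Two small technical points worth tightening. First, $\Delta$ is defined in the paper for open domains, so in the final display you should replace $\overline D_\infty$ by $D_\infty$ — e.g.\ observe that once $D_\infty\supseteq B(r)$ you have $\overline D_\infty\subseteq(1+\varepsilon)D_\infty$ for any $\varepsilon>0$, or invoke $\Delta(\overline D_\infty)=\Delta(D_\infty)$ via the usual dilation-of-an-admissible-lattice argument. Second, $\varepsilon_\alpha$ is a net rather than a sequence; the decreasing-net version of Hausdorff convergence to $\bigcap_\alpha\overline{D_\alpha}$ does hold (by a cofinality/compactness argument), so this is harmless, but the wording should reflect it. Neither affects correctness.
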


\section{The critical locus of a cylinder: completing the proof of Theorem \ref{critical-pieces}}

We now have everything needed to deduce Theorem \ref{critical-pieces}.
\begin{proof}[Proof of Theorem \ref{critical-pieces}]
It is readily verified that each of the sets in \eqref{critical-piece-21} and \eqref{critical-piece-12} is in $\mathcal{L}(C)$.
We need to check the reverse inclusion.
Next, if $B$ is not irreducible, we can apply Lemma \ref{contains-irred} to obtain an irreducible ${D\subset B}$ with $\Delta(D) = \Delta(B)$.
More is true: since $B$ is not a parallelogram, a simple application of Minkowski's convex body theorem shows that $D$ is not a parallelogram either.
We have the inclusion $\mathcal{L}(B) \subset \mathcal{L}(D)$ and also the inclusion of the corresponding cylinders $C_D \subset C_B$, which again induces {the reverse} inclusion $\mathcal{L}(C_B)\subset \mathcal{L}(C_D)$.

{It is then elementary to observe that 
$\mathcal{L}(B)$ (resp., $\mathcal{L}(C_B)$) is exactly the set of $B$-admissible (resp., $C_B$-admissible) lattices in $\mathcal{L}(D)$ (resp., $\mathcal{L}(C_D)$),
see \cite[Lemma 3.1]{KRS}.
So assuming the theorem is true for $C_D$, it is easy to observe that the set of $C_B$-admissible lattices in $\mathcal{L}(C_D)$ is exactly what our theorem claims is $\mathcal{L}(C_B)$.
}
Thus we can (and will) assume for the remainder that $B$ is irreducible.
Let $\Lambda$ be $C$-critical.
We deal with two cases, the first being that when $\Lambda$ has three independent points satisfying equation \eqref{3points}.
In this case, we can assume that we have three independent points $\vp_1,\vp_2,\vp_3 \in \Lambda \cap \partial C$ all satisfying
\begin{equation}\label{mainthm=1}
    {\pi_{+}}(*)=1
\end{equation}
and, by Lemma \ref{cor-lemma1}, with $\vp_1$ satisfying 
\eqref{cor-to-1.1}.

\begin{lem}
The points ${\pi_{-}}(\vp_1), {\pi_{-}}(\vp_2), {\pi_{-}}(\vp_3)$ satisfy the hypotheses of Lemma \ref{Dr.Rado}.
\end{lem}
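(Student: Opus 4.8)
Write $q_i:=\pi_-(\vp_i)$ for $i=1,2,3$. The plan is to verify, one by one, the three hypotheses of Lemma~\ref{Dr.Rado} for the triple $q_1,q_2,q_3$: that they lie in $\overline B$ with at least one of them in $B$, that they are not collinear, and that the interior of each segment joining two of them lies in $B$. The first is immediate: each $\vp_i\in\partial C$ satisfies $\pi_+(\vp_i)=1$ by \eqref{mainthm=1}, so the defining inequalities \eqref{cylinder-defn} of $C$ force $\nu(q_i)\le 1$; and by the choice of $\vp_1$ coming from Lemma~\ref{cor-lemma1} we have $\nu(q_1)<1$, so $q_1$ is an interior point of $B$. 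Non-collinearity is just as quick: $\vp_2-\vp_1$ and $\vp_3-\vp_1$ have vanishing third coordinate, hence equal their own $\pi_-$-projections $q_2-q_1$ and $q_3-q_1$; were $q_1,q_2,q_3$ collinear these two vectors would be proportional, contradicting the linear independence of $\vp_1,\vp_2,\vp_3$.

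The segment condition is where the real work lies. For the two segments containing $q_1$ it follows from convexity, since $q_1\in B$ and $q_2,q_3\in\overline B$. For the remaining pair $\{q_2,q_3\}$ the same reasoning applies unless both $q_2$ and $q_3$ lie on $\partial B$, so assume that. If some $z=\lambda q_2+(1-\lambda)q_3$ with $0<\lambda<1$ were on $\partial B$, then $1=\nu(z)\le\lambda\nu(q_2)+(1-\lambda)\nu(q_3)=1$ forces equality in the triangle inequality for $\nu$, and a standard convexity argument then shows that the \emph{whole} segment $[q_2,q_3]$ must lie on $\partial B$. So it suffices to exclude the possibility that $[q_2,q_3]$ is a line segment contained in $\partial B$, i.e.\ that $q_2,q_3$ lie on a common face of $B$.

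To handle this I would first record that $\Lambda_0:=\Lambda\cap\ker\pi_+$ is a $B$-critical lattice. Indeed it has rank $2$ (it contains the independent vectors $\vp_2-\vp_1,\vp_3-\vp_1$), the map $\pi_-$ is injective on it, and $\pi_-(\Lambda_0)$ is $B$-admissible: a nonzero $w\in\Lambda_0$ with $\pi_-(w)\in B$ would satisfy $w\in C$, contradicting the $C$-admissibility of $\Lambda$. Writing $\pi_+(\Lambda)=\tfrac1k\Z$ with $k\in\N$ (as $1=\pi_+(\vp_1)\in\pi_+(\Lambda)$) and using $d(\Lambda)=\Delta(C)=\Delta(B)$ by \equ{equaldets}, together with $d(\Lambda)=\tfrac1k\,d(\pi_-(\Lambda_0))$ and $d(\pi_-(\Lambda_0))\ge\Delta(B)$, forces $k=1$ and $d(\pi_-(\Lambda_0))=\Delta(B)$. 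In particular $\pi_+(\Lambda)=\Z$, so the points of $\Lambda$ with $\pi_+=1$ form the single coset $\vp_1+\Lambda_0$, whence $q_2-q_3=\pi_-(\vp_2-\vp_3)\in\pi_-(\Lambda_0)$. Now, supposing $[q_2,q_3]\subset\partial B$: since $q_1$ lies in $B$, $q_2,q_3$ lie on a common face, and $q_2-q_3$ is a nonzero vector of the critical lattice $\pi_-(\Lambda_0)$, a careful argument using the rigidity of critical lattices of an irreducible non-parallelogram domain — Lemma~\ref{irredcritset}, Lemma~\ref{configuration-of-points} and Theorem~\ref{threepairs} — should yield a contradiction.

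The main obstacle is precisely this last step. Everything else is elementary geometry of the cylinder; excluding the flat-face configuration for $q_2,q_3$ is where Mahler's structural theory of irreducible planar domains must be invoked, and it is probably cleanest to isolate it as a separate claim. (Alternatively, if one has at hand the fact that an irreducible domain which is not a parallelogram is strictly convex, the flat case does not arise at all.)
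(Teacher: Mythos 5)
There is a genuine gap, and you have honestly flagged where it is. The preliminary verifications are fine: that each $\pi_-(\vp_i)\in\overline B$ and $\pi_-(\vp_1)\in B$, non-collinearity (because $\vp_2-\vp_1$, $\vp_3-\vp_1$ have vanishing third coordinate and are independent), and the two segments through $q_1$ by convexity. The reduction to excluding $[q_2,q_3]\subset\partial B$ is also correct, and the observation that $\pi_-(\Lambda_0)$ is $B$-critical and contains $q_2-q_3$ is true and potentially useful. But the heart of the lemma — deriving a contradiction from the flat-segment configuration — is left as ``a careful argument \dots should yield a contradiction,'' and that step is exactly what needs to be supplied.

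The paper closes this gap by a Minkowski-sum argument tied to Mahler's hexagon structure. If the open segment from $q_2$ to $q_3$ meets $\partial B$, then (using Lemma~\ref{irredcritset} and Lemma~\ref{configuration-of-points}) one produces a $B$-critical lattice $\Lambda'$ with adjacent hexagon vertices $\vq_1,\vq_2$ on $\partial B$ such that both $q_2,q_3$ lie in the interior of the triangle $T$ with vertices $\vq_1,\vq_2,\vq_1+\vq_2$. One then observes that
$T+(-T)$ is the convex hull of the pairwise differences of the vertices, which by Theorem~\ref{threepairs} lies in $\overline B$, and consequently $q_2-q_3\in\operatorname{interior}(T)+(-\operatorname{interior}(T))\subset B$. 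Since $\vp_2-\vp_3\in\Lambda$ has vanishing third coordinate and $\pi_-$-image $q_2-q_3\in B$, this contradicts the $C$-admissibility of $\Lambda$. So the final contradiction is the same one you are aiming for (a nonzero point of $\Lambda$ landing in $C$), but the mechanism that forces $q_2-q_3$ into $B$ is the triangle/Minkowski-sum estimate, which your argument does not supply. Also be careful with your parenthetical alternative: a non-parallelogram irreducible planar domain need not be strictly convex (Lemma~\ref{C1-boundary} only gives $\mathcal C^1$ boundary), so you cannot dismiss the flat case by appealing to strict convexity.
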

\begin{proof}
The assumptions on the {points} $\vp_i$ ensure that the ${\pi_{-}}(\vp_i)$ are in $\overline{B}$.
These assumptions also ensure that ${\pi_{-}}(\vp_1)$ is in $B$.
It remains to show that the open segment joining ${\pi_{-}}(\vp_2)$ and ${\pi_{-}}(\vp_3)$ is contained in $B$.
Since $B$ is irreducible, we see from Lemma \ref{irredcritset} that the only way this fails is if we have a $B$-critical lattice $\Lambda'$ with two points $\vq_1, \vq_2 \in \Lambda' \cap \partial B$, each distinct from ${\pi_{-}}(\vp_2)$ and ${\pi_{-}}(\vp_3)$, such that one of the segments of $\partial B$ connecting $\vq_1$ to $\vq_2$ contains both ${\pi_{-}}(\vp_2)$ and ${\pi_{-}}(\vp_3)$ but no other point of $\Lambda'$.

In light of Theorem \ref{threepairs}, we see that the six points of $\Lambda' \cap \partial B$ are
\begin{equation}\label{hexagon-vertices}
    \vq_1,\ \vq_2,\ \vq_2-\vq_1,\ -\vq_1,\ -\vq_2,\ \vq_1 - \vq_2.
\end{equation}
Since $\vq_1 - \vq_2$, $\vq_1$, $\vq_1 + \vq_2$ are collinear and similarly with $\vq_1$ and $\vq_2$ swapped, the segment of $\partial B$ joining $\vq_1$ and $\vq_2$ must lie in the convex hull, call it $T$, of $\vq_1$, $\vq_2$ and $\vq_1+\vq_2$.
In particular, using the assumptions on $\Lambda'$ along with Lemma \ref{irredcritset},
both ${\pi_{-}}(\vp_2)$ and ${\pi_{-}}(\vp_3)$ must belong to the interior of $T$.
See the diagram \eqref{triangle} below for clarity.

\begin{equation}\label{triangle}
    \begin{tikzpicture}[scale=2,  baseline=(current  bounding  box.center)]
    
    \fill[pink!20] (1,0) -- (0.3, 0.8) -- (-0.7, 0.8) -- (-1,0) -- 
    (-0.3, -0.8) -- (0.7, -0.8) -- cycle;
    
    \draw [<->](-1.5,0) -- (1.5,0);
    \draw [<->] (0,-1.3) -- (0,1.3);
    
    \draw (1,0) -- (0.3,0.8);
    \draw (1.3,0.8) -- (0.3,0.8);
    \draw (1,0) -- (1.3,0.8);
    
    \fill[magenta!70] (1,0) -- (1.3,0.8) -- (0.3,0.8) -- cycle;

    \fill [red] (1,0) circle[radius=0.03];
    \fill [red] (0.3,0.8) circle[radius=0.03];
    \fill [red] (-0.7,0.8) circle[radius=0.03];
    \fill [red] (0,0) circle[radius=0.03];
    \fill [red] (1.3,0.8) circle[radius=0.03];
    \fill [red] (-1,0) circle[radius=0.03];
    \fill [red] (-0.3,-0.8) circle[radius=0.03];
    \fill [red] (0.7,-0.8) circle[radius=0.03];
    
    \node [below] at (1,0) {\footnotesize $\vq_1$};
    \node [above] at (0.3,0.8) {\footnotesize $\vq_2$};
    \node [above] at (1.3, 0.8) {\footnotesize $\vq_1+\vq_2$};
    \node [left] at (-0.7, 0.8) {\footnotesize $\vq_2-\vq_1$};
    \node [above] at (-1.2, 0) {\footnotesize $-\vq_1$};
    \node [left] at (-0.3, -0.8) {\footnotesize $-\vq_2$};
    \node [right] at (0.7, -0.8) {\footnotesize $\vq_1-\vq_2$};
    \node [below] at (0,-1.4) {\footnotesize The sum $T+(-T)$ of the magenta triangle is 
    the pink hexagon.};

    \end{tikzpicture}
\end{equation}
The Minkowski sum $T+(-T)$ is 
{the convex hull of} the set of pairwise differences of the 
{vertices} of $T$
(that is, the differences of the points $\vq_1$, $\vq_2$ and $\vq_1+\vq_2$)
which is contained in $\overline{B}$.
Moreover, ${\pi_{-}}(\vp_2) - {\pi_{-}}(\vp_3)$ actually belongs to the convex open set $\operatorname{interior}(T) + \big(- \operatorname{interior}(T)\big)$, which in turn is contained in $B$.

Since $\vp_2$ and $\vp_3$ are in $\Lambda$, and since they both satisfy equation \eqref{mainthm=1}, 
the above discussion shows that the open segment joining ${\pi_{-}}(\vp_2)$ and ${\pi_{-}}(\vp_3)$ cannot contain points of $\partial B$ without contradicting the $C$-admissibility of $\Lambda$.
Thus the lemma is proved and we may apply Lemma \ref{Dr.Rado}.
\end{proof}

Given any $\vp \in \Lambda$, there exists $u\in \Z$ such that the point $\vp - u\vp_1$ satisfies
\begin{equation}\label{maintheorem<1/2}
      |{\pi_{+}}(*)| \leq 1/2.
\end{equation}
By {applying} Lemma \ref{Dr.Rado} to the three points ${\pi_{-}}(\vp_i)$, we see that there exist $v, w \in \Z$ such that 
\begin{equation*}
    {\pi_{-}}(\vp - u\vp_1) - v\big({\pi_{-}}(\vp_2)-{\pi_{-}}(\vp_1)\big) - w\big({\pi_{-}}(\vp_3) - {\pi_{-}}(\vp_1)\big) \in B.
\end{equation*}
Since $\Lambda$ is $C$-admissible, this along with 
 \eqref{maintheorem<1/2} shows that we actually have
\begin{equation*}
    \vp - u\vp_1 - v(\vp_2-\vp_1) - w(\vp_3 - \vp_1) = 0.
\end{equation*}
Thus we have
\begin{equation*}
    \Lambda =  
  \left[ {\begin{array}{ccc}
   \vp_2 - \vp_1 & \vp_3 - \vp_1 & \vp_1 \\
  \end{array} } \right] \Z^3.
\end{equation*}
Equation \eqref{mainthm=1} says that all the points $\vp_i$ are on top of the cylinder, so that we are in the case where
\begin{equation*}
    \Lambda = \left[ {\begin{array}{ccc}
   1 & 0 & * \\
   0 & 1 & * \\
   0 & 0 & 1
  \end{array} } \right]
    \left[ \begin{array}{c | c} 
      M &  \begin{array}{c} 0 \\ 0 \end{array} \\ 
      \hline
      \begin{array}{c c} 0 & 0 \end{array} & 1 
     \end{array} \right] \Z^3
\end{equation*}
with $M\Z^2$ being $B$-admissible.
On the other hand, since $\Delta(C) \leq \Delta(B)$, we must have $d(M\Z^2) \leq \Delta(B)$.
From the definition of $\Delta(B)$ we get that $d(M\Z^2) = \Delta(B)$ so that it is actually $B$-critical.
Thus we have shown $\Lambda$ is in the set \eqref{critical-piece-12}.

We now turn to the case where $\Lambda$ has fewer than three independent points satisfying 
 \eqref{3points}.
$B$ is still assumed to be irreducible.
As in Lemma \ref{lemma1-CR}, we construct a path of $C$-critical lattices
$$
L(t)\Lambda,\ \ t\in [0,t_1]
$$
of the form \eqref{parameterization-of-continuous-family}.
Precisely, $L(t)$ is lower triangular unimodular with $(2,1)$-entry zero
and $t_1$ {is such} that 
$L(t_1)\Lambda$ has three {linearly} independent points satisfying 
\eqref{3points}, but no such triple exists for $t<t_1$.
We analyze the lattice $L(t_1)\Lambda$ and the configuration of points lying on the top of the cylinder. 

To fix notation, let 
\begin{equation}\label{ab}
L(t) = 
\left[ {\begin{array}{ccc}
   1 & 0 & 0 \\
   0 & 1 & 0 \\
   t\alpha & t\beta & 1
  \end{array} } \right].
\end{equation}
{Now suppose} 
we have the points $\vp(t) \in L(t)\Lambda \cap \partial C$ with 
\begin{equation*}
{\pi_{+}}\big(\vp(t)\big) < 1 \text{ for $t<t_1$, and } {\pi_{+}}\big(\vp(t_1)\big) = 1. 
\end{equation*}
We can assume $\vp(0) = (p_1, p_2, p_3) \in \Lambda$, and $\vp(t) = L(t)\vp(0)$.
Precisely, 
\begin{equation*}
    \vp(t) = \big(p_1,\ p_2,\ t(\alpha p_1 + \beta p_2) + p_3\big).
\end{equation*}
For later use we record here that
\begin{equation*}\label{crucial}
    (p_1, p_2)\cdot(\alpha, \beta) :=(\alpha p_1 + \beta p_2) > 0.
\end{equation*}

Now take three {linearly} independent points $\vp_1$, $\vp_2$, $\vp_3$ of $L(t_1)\Lambda$ on top of the cylinder while fixing $\vp_1:= \vp(t_1)$.
Note that by Lemma \ref{cor-lemma1} we can assume that $\vp_2$ satisfies 
\begin{equation*}
\nu\big({\pi_{-}}(*)\big) <1.
\end{equation*}
Applying Lemma \ref{Dr.Rado} exactly as in first case dealt with above, we arrive at the conclusion that lattice in $\R^2$ generated by ${\pi_{-}}(\vp_2 - \vp_1)$ and ${\pi_{-}}(\vp_3 -\vp_1)$ is $B$-critical.
Thus $L(t_1)\Lambda$ is again of the form
\begin{equation*}
    L(t_1)\Lambda = \left[ \begin{array}{c | c} 
      M &  \begin{array}{c} * \\ * \end{array} \\ 
      \hline
      \begin{array}{c c} 0 & 0 \end{array} & 1 
     \end{array} \right] \Z^3
\end{equation*}
with $M\Z^2$ being $B$-critical.
Our goal now is to show that $L(t_1)\Lambda$ contains the point $(0,0,1)$.
This would enable us to write
\begin{equation*}
    L(t_1)\Lambda = \left[ \begin{array}{c | c} 
      M &  \begin{array}{c} 0 \\ 0 \end{array} \\ 
      \hline
      \begin{array}{c c} 0 & 0 \end{array} & 1 
     \end{array} \right] \Z^3, \text{ or rather } 
     \Lambda = \left[ {\begin{array}{ccc}
   1 & 0 & 0 \\
   0 & 1 & 0 \\
   * & * & 1
  \end{array} } \right]
  \left[ \begin{array}{c | c} 
      M &  \begin{array}{c} 0 \\ 0 \end{array} \\ 
      \hline
      \begin{array}{c c} 0 & 0 \end{array} & 1 
     \end{array} \right] \Z^3,
\end{equation*}
which would show that $\Lambda$ is in the set \eqref{critical-piece-21}.

To this end, we need to take careful stock of the various moving parts occurring in the deformation of $\Lambda$.
The reader is advised to stare at diagram \eqref{diagram-disc} for a minute before reading on.

From Lemma \ref{C1-boundary}, let $T$ be the tangent line to $\big(B + {\pi_{-}}(\vp_1)\big)$ at zero.
Note that, by Theorem \ref{threepairs} and Lemma \ref{irredcritset}, the intersection
\begin{equation*}
    \partial B \cap \big({\pi_{-}}(\vp_1) + \partial B\big)
\end{equation*}
consists of exactly two points $\vr_1$ and $\vr_2$ which generate the unique critical lattice of $B$ containing ${\pi_{-}}(\vp_1)$.
Let $A_1$ and $A_2$ denote the open arcs of ${\pi_{-}}(\vp_1) + \partial B$ joining the origin and $\vr_1$ and $\vr_2$ respectively.
By Lemma \ref{configuration-of-points}, any $B$-critical lattice distinct from the one containing ${\pi_{-}}(\vp_1)$ is determined by its intersection with $A_1 - {\pi_{-}}(\vp_1)$ and uniquely determines a second point in $A_2 - {\pi_{-}}(\vp_1)$.
Moreover, by Lemma \ref{irredcritset}, these two hypothesized points in $A_1 - {\pi_{-}}(\vp_1)$ and $A_2 - {\pi_{-}}(\vp_1)$ cannot both lie on the tangent $T -{\pi_{-}}(\vp_1)$ to $B$ at $-{\pi_{-}}(\vp_1)$.

All of this information is neatly summarized in the diagram below.
\begin{equation}\label{diagram-disc}
    \begin{tikzpicture}[scale=2, baseline=(current  bounding  box.center)]
    \draw [<->](-3.1,0) -- (3.1,0);
    \draw [<->] (0,-1.5) -- (0,1.5);
    
    \node[right] at (0,1.4) {\tiny $T$};
    
    \draw [thick, blue] (1,0) to [out=90, in=0] (0,1);
    \draw [thick, blue] (0,1) to [out=180, in=90] (-1,0);
    \draw [thick, blue] (-1,0) to [out=270, in=180] (0,-1);
    \draw [thick, blue] (0,-1) to [out=0, in=270] (1,0);
    
    \draw [thick, olive, dotted] (0,0) to [out=90, in=0] (-1,1);
    \draw [thick, olive, dotted] (-1,1) to [out=180, in=90] (-2,0);
    \draw [thick, olive, dotted] (-2,0) to [out=270, in=180] (-1,-1);
    \draw [thick, olive, dotted] (-1,-1) to [out=0, in=270] (0,0);
    
    \draw [very thick, blue, dotted]  (0,0) to [out=90, in=330] (-0.5,0.866);
    \draw [very thick, blue, dotted]  (0,0) to [out=270, in=30] (-0.5,-0.866);
    
    \draw [thick, orange, ->] (0,0) to (-3,-1);
    \node [above] at (-3,-0.9) {\tiny $(\alpha, \beta)$};
    \draw [thick, orange, dotted, <->] (0.5,-1.5) to (-0.5,1.5);
    \node [right] at (0.5,-1.3) {\tiny $Q$ is determined by this line};    

    \fill [red] (0,0) circle[radius=0.03];
    
    \fill [red] (-0.5,0.866) circle[radius=0.03];
    \node [above] at (-0.5,0.866) {\tiny $\vr_2$};
    
    \fill [red] (-0.5,-0.866) circle[radius=0.03];
    \node [below] at (-0.5,-0.866) {\tiny $\vr_1$};
    
    \fill [red] (-1,0) circle[radius=0.03];
    \node [left] at (-1,0.15) {\tiny ${\pi_{-}}(\vp_1)$};
    
    \fill [black] (0.707-1, -0.707) circle[radius=0.03];
    \node [right] at (0.707-1, -0.707) {\tiny $\vq_1$};
    
    \fill [black] (0.966-1, 0.259) circle[radius=0.03];
    \node [right] at (0.966-1,0.259) {\tiny $\vq_2$};
    
        \node [below] at (0,-1.6) { \footnotesize Are the $\vq_i$ above a possible configuration for the points ${\pi_{-}}(\vp_2)$ and ${\pi_{-}}(\vp_3)$?};
    \end{tikzpicture}
\end{equation}
We can finally prove
\begin{lem}
Write $\Lambda'$ to be the $B$-critical lattice
\begin{equation*}
    \Z{\pi_{-}}(\vp_2-\vp_1) + \Z{\pi_{-}}(\vp_3-\vp_1).
\end{equation*}
Then the origin belongs to the grid 
\begin{equation}\label{grid}
    {\pi_{-}}(\vp_1) + \Lambda'.
\end{equation}
\end{lem}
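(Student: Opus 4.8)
The statement to prove is equivalent to $0\in G$, where I abbreviate $G:=\pi_-(\vp_1)+\Lambda'$; and since $\Lambda'$ is a group, $0\in G$ is in turn equivalent to $\pi_-(\vp_1)\in\Lambda'$, i.e.\ to $G=\Lambda'$, i.e.\ to $(0,0,1)\in L(t_1)\Lambda$ (note that $G$ is precisely the level‑one slice $\{\vv:(\vv,1)\in L(t_1)\Lambda\}$). I record two facts to be used throughout: first, $\pi_-(\vp_1)\in\partial B$, since $\vp(t)\in\partial C$ and $|\pi_+(\vp(t))|<1$ for $t<t_1$ force $\nu(\pi_-(\vp(t)))=1$, while $\pi_-(\vp(t))=\pi_-(\vp_1)$ for all $t$; second, $(\alpha,\beta)\cdot\pi_-(\vp_1)>0$.

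The first real step is to exploit the $C$-admissibility of the nearby lattices to obtain the constraint
\[(\star)\qquad\text{there is no }\vv\in G\text{ with }\nu(\vv)<1\text{ and }(\alpha,\beta)\cdot\vv>0.\]
Indeed, $L(t)\Lambda=\big(L(t)L(t_1)^{-1}\big)L(t_1)\Lambda$, and $L(t)L(t_1)^{-1}$ is the lower unipotent with bottom row $\big((t-t_1)\alpha,(t-t_1)\beta,1\big)$, so it carries $(\vv,1)\in L(t_1)\Lambda$ to $\big(\vv,\,1-(t_1-t)(\alpha,\beta)\cdot\vv\big)\in L(t)\Lambda$. If some $\vv\in G$ had $\nu(\vv)<1$ and $(\alpha,\beta)\cdot\vv>0$, then for $t<t_1$ close enough to $t_1$ this would be a nonzero point of $L(t)\Lambda$ inside the open cylinder $C$, contradicting the $C$-criticality of $L(t)\Lambda$ on $[0,t_1]$. (No other point enters $C$ as $t\uparrow t_1$: on level $0$ this is the $B$-admissibility of $\Lambda'$, and on each level $k$ with $|k|\ge2$ the finitely many points with $\nu(\pi_-)<1$ have third coordinate tending to $k$.)

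Now assume for contradiction that $0\notin G$, so $\pi_-(\vp_1)\notin\Lambda'$ and hence $\Lambda'\ne\Lambda_1$, the unique $B$-critical lattice containing $\pi_-(\vp_1)$ (Lemma \ref{irredcritset}(i)). By Lemma \ref{configuration-of-points}, $\Lambda'$ has exactly one point $\vs_i$ in each of the two arcs of $\partial B$ adjacent to $-\pi_-(\vp_1)$; translating by $\pi_-(\vp_1)$ places $\pi_-(\vp_1)+\vs_i$ on the two arcs $A_1,A_2$ of $\pi_-(\vp_1)+\partial B$ which, together with $0$, form the portion of that convex curve lying in $\overline{B}$ (the only crossings of $\partial B$ and $\pi_-(\vp_1)+\partial B$ being the two points $\vr_1,\vr_2$ generating $\Lambda_1$). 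Since the $\vs_i$ lie in the \emph{open} arcs, $\pi_-(\vp_1)+\vs_i\in B$, and then $(\star)$ yields $(\alpha,\beta)\cdot(\pi_-(\vp_1)+\vs_i)\le0$ for $i=1,2$.

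The remaining step — turning this into a contradiction — is the main obstacle, and it is what diagram \eqref{diagram-disc} is designed to organize. Using Lemma \ref{C1-boundary}, let $T$ be the tangent to $\pi_-(\vp_1)+\partial B$ at $0$. Because $(\alpha,\beta)\cdot\pi_-(\vp_1)>0>(\alpha,\beta)\cdot(-\pi_-(\vp_1))$, the line $\{(\alpha,\beta)\cdot\x=0\}$ cannot support $\pi_-(\vp_1)+B$ at $0$, so it crosses $T$ transversally at $0$; consequently one of $A_1,A_2$, say $A_1$, leaves $0$ into the open half-plane $\{(\alpha,\beta)\cdot\x>0\}$, forcing $\pi_-(\vp_1)+\vs_1$ to sit far from $0$ on $A_1$. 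Combining this with the $\mathcal{C}^1$ convexity of $\partial B$, the identification of $\vr_1,\vr_2$ and of the hexagon $\pm\pi_-(\vp_1),\pm\vr_1,\pm\vr_2$ of $\Lambda_1$, and the observation (from Lemma \ref{irredcritset}(ii)) that $\vs_1$ and $\vs_2$ cannot both lie on the tangent line $T-\pi_-(\vp_1)$ to $\partial B$ at $-\pi_-(\vp_1)$, one can locate $\vs_1$ on $\partial B$ precisely enough to violate $(\alpha,\beta)\cdot(\pi_-(\vp_1)+\vs_1)\le0$. I expect this final planar bookkeeping — essentially comparing, along the convex $\mathcal{C}^1$ curve $\partial B$, the positions of the tangent $T-\pi_-(\vp_1)$, the line $\{(\alpha,\beta)\cdot\x=0\}-\pi_-(\vp_1)$, the vertices $\pm\pi_-(\vp_1),\pm\vr_1,\pm\vr_2$, and the forced points $\vs_1,\vs_2$ — to be the delicate part of the argument.
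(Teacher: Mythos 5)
Your setup matches the paper's: you correctly establish (via $C$-admissibility of $L(t)\Lambda$ for $t<t_1$) that any point $\vv$ of the level-one slice $G=\pi_-(\vp_1)+\Lambda'$ with $\nu(\vv)<1$ must have $(\alpha,\beta)\cdot\vv\le0$, you reduce to $\Lambda'\ne\Lambda_1$, and you invoke Lemma \ref{configuration-of-points} to place $\vq_i:=\pi_-(\vp_1)+\vs_i$ on the open arcs $A_i$, and Lemma \ref{irredcritset}(ii) to note $\vs_1,\vs_2$ cannot both lie on the tangent to $\partial B$ at $-\pi_-(\vp_1)$. Where the argument breaks is the final geometric deduction, which you yourself flag as incomplete.

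The specific claim you make there --- that because $(\alpha,\beta)\cdot\pi_-(\vp_1)>0>(\alpha,\beta)\cdot(-\pi_-(\vp_1))$, the line $\partial Q:=\{(\alpha,\beta)\cdot\x=0\}$ ``cannot support $\pi_-(\vp_1)+B$ at $0$'' and hence meets $T$ transversally --- is false. The point $-\pi_-(\vp_1)$ is not in $\pi_-(\vp_1)+\overline B$ (the diameter through $0$ is $[0,2\pi_-(\vp_1)]$, whose other endpoint satisfies $(\alpha,\beta)\cdot 2\pi_-(\vp_1)>0$), so the inequality you cite is entirely consistent with $\partial Q$ being the supporting line of $\pi_-(\vp_1)+B$ at $0$, with the body on the $Q$ side. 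Worse, the correct argument goes in the \emph{opposite} direction: since $\vq_1,\vq_2$ lie in the two arcs of $\pi_-(\vp_1)+\partial B$ emanating from $0$ in opposite directions and both satisfy $(\alpha,\beta)\cdot\vq_i\le0$, the $\mathcal{C}^1$ boundary near $0$ must lie in $Q^c$ on both sides of $0$, which forces $T=\partial Q$. Then $\pi_-(\vp_1)+B\subset\overline Q$, so the set $A$ of boundary points in $Q^c$ is precisely $(\pi_-(\vp_1)+\partial B)\cap T$, a line segment containing $0,\vq_1,\vq_2$. Hence $\vq_1,\vq_2\in T$, i.e.\ $\vs_1,\vs_2$ both lie on the tangent $T-\pi_-(\vp_1)$ to $\partial B$ at $-\pi_-(\vp_1)$ --- exactly the configuration Lemma \ref{irredcritset}(ii) forbids. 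So the contradiction comes from the line \emph{being} a support, not from it failing to be one; your proposed route would need a different (and as written, unjustified) reason why the supporting case cannot occur.
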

\begin{proof}
There are two possibilities for the $B$-critical lattice $\Lambda'$.
It is either the $B$-critical lattice containing ${\pi_{-}}(\vp_1)$, or 
it contains a point on each of the open arcs $A_i - {\pi_{-}}(\vp_1)$.
The conclusion of the lemma holds in the first case.

So, for sake of contradiction, we assume the second.
Let $\vq_1- {\pi_{-}}(\vp_1)$ and $\vq_2 - {\pi_{-}}(\vp_1)$ denote the two points of $\Lambda'$ on the open arcs $A_i - {\pi_{-}}(\vp_1)$.
Then $\vq_1$ and $\vq_2$ are in $A_1$ and $A_2$ respectively.
Moreover, these points are in the grid \eqref{grid} and hence are the images of points in the $C$-critical {lattice} $L(t_1)\Lambda$ that satisfy
\begin{equation*}
    \nu\big({\pi_{-}}(*)\big) < 1 \text{ and } {\pi_{+}}(*) = 1.
\end{equation*}
Now the vector $(\alpha, \beta)$ occurring in the deformation 
 \eqref{ab} determines an open half-plane $Q$ by the inner-product condition
\begin{equation*}\label{p1-product}
    (\alpha, \beta)\cdot (*, *) > 0.
\end{equation*}
Since $L(t)\Lambda$ is $C$-admissible for $0<t<t_1$, we see that both $\vq_1$ and $\vq_2$ must satisfy
\begin{equation*}\label{qi-product}
    (\alpha, \beta)\cdot (*,*) \leq 0.
\end{equation*}
That is, ${\pi_{-}}(\vp_1) \in Q$, and $\vq_1$ and $\vq_2$ must belong to the closed half-plane $Q^c$.

Let $A$ be the {set of} boundary points of $\partial B + {\pi_{-}}(\vp_1)$ contained in $Q^c$. We have just seen that $\vq_1$, $\vq_2$ and $0$ are in $A$, whereas $2{\pi_{-}}(\vp_1)$ is not.
Since $0$ is in the boundary of $Q$, and since $\vq_1$ and $\vq_2$ are in different components of the complement of the line determined by ${\pi_{-}}(\vp_1)$, we have that $A$ must actually be a straight line.

This is a contradiction to Lemma \ref{irredcritset} since, as noted above, $\vq_1$ and $\vq_2$ cannot both belong to the tangent {line} $T$.
Thus we necessarily have that ${\pi_{-}}(\vp_1) \in \Lambda'$, and the conclusion of the lemma holds.
\end{proof}
From this lemma and the fact that each of the $\vp_i$ is on top of the cylinder, that is, satisfies equation \eqref{mainthm=1}, we see that $(0,0,1)$ does indeed belong to $L(t_1)\Lambda$.
This concludes this final case of the theorem.
\end{proof}

\begin{exmp}\label{euclid} \rm
The critical locus of the cylinder over the unit disc is given by the union of two compact submanifolds (cf.\ \cite{Ma1}):
\begin{equation}\label{K-euclidean-1}
    \left\lbrace
        \left[ {\begin{array}{ccc}
   \cos(t) & -\sin(t) & 0 \\
   \sin(t) & \cos(t) & 0 \\
   x & y & 1
  \end{array} } \right]
        \left[ {\begin{array}{ccc}
   1 & 1/2 & 0 \\
   0 & \sqrt{3}/2 & 0 \\
   0 & 0 & 1
  \end{array} } \right] \Z^3 : x,y,t \in \R
     \right\rbrace 
\end{equation}
and
\begin{equation}\label{K-euclidean-2}
    \left\lbrace
        \left[ {\begin{array}{ccc}
   \cos(t) & -\sin(t) & x \\
   \sin(t) & \cos(t) & y \\
   0 & 0 & 1
  \end{array} } \right]
        \left[ {\begin{array}{ccc}
   1 & 1/2 & 0 \\
   0 & \sqrt{3}/2 & 0 \\
   0 & 0 & 1
  \end{array} } \right] \Z^3 : x,y,t \in \R
     \right\rbrace.
\end{equation}
Each is the orbit of a subgroup of $\operatorname{SL}_3(\R)$, and the same is true for their intersection.
{Letting $\nu$ stand for the Euclidean norm in $\R^2$, on scaling each lattice above by $$\Delta_\nu^{-1/3} = \left(\sqrt{3}/2\right)^{-1/3},$$ we get the critical locus $\mathcal{L}_\eta$ associated to the 
norm \equ{norm}.
As mentioned before, this normalization is done to ensure that the critical locus consists of unimodular lattices.}\end{exmp}

\section{The interaction of the critical locus and the flow}\label{completion}

{In order to use the results of the previous sections for the proof of our main result, it will be convenient to introduce the following two subsets of $X_3$:
\begin{equation*}\label{zplus}
  \mathcal{Z}_+ := \left\lbrace
        \left[ {\begin{array}{ccc}
   * & * & 0\\
   * & * & 0 \\
   * & * & *
  \end{array} } \right]\Z^3
     \right\rbrace
\quad\text{and}\quad
  \mathcal{Z}_- := \left\lbrace
        \left[ {\begin{array}{ccc}
   * & * & * \\
   * & * & * \\
   0 & * & *
  \end{array} } \right]\Z^3
     \right\rbrace   . 
\end{equation*}
In words, $\mathcal{Z}_+$  (resp., $\mathcal{Z}_-$) consist of  lattices having a nonzero vector on the $x_3$-axis (resp., on the $(x_1,x_2)$-plane).}

\smallskip

Now observe that a combination of Theorems \ref{critical-pieces} and \ref{Hajos} readily produces the following 
\begin{prop}\label{critical-corollary}
Let 
  $\eta$ be the norm on $\R^3$ of the form \equ{norm}. Then its critical locus $\mathcal{L}_\eta$ is contained in  $\mathcal{Z}_+\cup\mathcal{Z}_-$. 
\end{prop}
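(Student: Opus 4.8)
The statement to prove is that $\mathcal{L}_\eta \subset \mathcal{Z}_+ \cup \mathcal{Z}_-$ when $\eta$ has the cylindrical form $\eta(x_1,x_2,x_3) = \max\{\nu(x_1,x_2),|x_3|\}$. The plan is to reduce this immediately to the structure theorems already established for critical loci of cylinders. Recall that $\mathcal{L}_\eta$ is, up to scaling by $\Delta(C)^{-1/3}$, equal to $\mathcal{L}(C)$ where $C = B_\nu(1)\times[-1,1]$ is precisely the cylinder $C_B$ over the unit ball $B = B_\nu(1)$ of $\nu$. Scaling by a positive scalar does not affect whether a lattice has a nonzero vector on the $x_3$-axis or on the $(x_1,x_2)$-plane, so it suffices to show $\mathcal{L}(C_B) \subset \mathcal{Z}_+ \cup \mathcal{Z}_-$.

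\textbf{Main step: case split on the shape of $B$.} I would split into two cases according to whether $B$ is a parallelogram. If $B$ is \emph{not} a parallelogram, Theorem \ref{critical-pieces} describes $\mathcal{L}(C_B)$ explicitly as the union of the two sets \eqref{critical-piece-21} and \eqref{critical-piece-12}. Inspecting these: every lattice in \eqref{critical-piece-21} has the form $L\cdot \mathrm{diag}(M,1)\Z^3$ with $L$ lower-unitriangular with vanishing $(2,1)$-entry, so it contains the vector $L\cdot(0,0,1)^t = (0,0,1)^t$, a nonzero vector on the $x_3$-axis; hence it lies in $\mathcal{Z}_+$. Every lattice in \eqref{critical-piece-12} has the form $U\cdot\mathrm{diag}(M,1)\Z^3$ with $U$ upper-unitriangular affecting only the last column; it contains $U\cdot(1,0,0)^t = (1,0,0)^t$ and $U\cdot(0,1,0)^t = (0,1,0)^t$, nonzero vectors in the $(x_1,x_2)$-plane, so it lies in $\mathcal{Z}_-$. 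This already gives $\mathcal{L}(C_B)\subset\mathcal{Z}_+\cup\mathcal{Z}_-$ in the non-parallelogram case.

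\textbf{Remaining case: $B$ a parallelogram.} If $B$ is a parallelogram, then by a linear change of coordinates in the $(x_1,x_2)$-plane (which preserves both $\mathcal{Z}_+$ and $\mathcal{Z}_-$, since it is block-diagonal and fixes the $x_3$-axis while preserving the $(x_1,x_2)$-plane) we may assume $B$ is the unit square, so $C_B$ is the unit cube, i.e. the unit ball for the supremum norm on $\R^3$. Then Theorem \ref{Hajos} (Haj\'os--Minkowski) identifies $\mathcal{L}(C_B)$ as the union over permutation matrices $P$ of the $P$-conjugates of the upper-unitriangular orbit of $\Z^3$. For each such lattice one checks directly that it contains a standard basis vector $e_i$ (the image under the permuted unipotent of whichever basis vector the unipotent fixes), and every standard basis vector lies either on the $x_3$-axis (namely $e_3$) or in the $(x_1,x_2)$-plane ($e_1,e_2$); so again each such lattice lies in $\mathcal{Z}_+\cup\mathcal{Z}_-$.

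\textbf{Expected obstacle.} There is essentially no deep obstacle: the whole content has been pushed into Theorems \ref{critical-pieces} and \ref{Hajos}, and the proposition is a matter of reading off that the explicit matrix forms always contain a standard coordinate vector or coordinate-plane vector. The one point requiring a little care is the parallelogram case, where one must justify the reduction to the cube by a coordinate change and verify it respects the definitions of $\mathcal{Z}_+$ and $\mathcal{Z}_-$ — but a linear map of the form $\mathrm{diag}(g,1)$ with $g\in\mathrm{GL}_2(\R)$ visibly sends lattices with a nonzero $x_3$-axis vector to lattices with a nonzero $x_3$-axis vector, and similarly for the plane, so this is routine.
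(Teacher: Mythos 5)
Your proof is correct and follows essentially the same route as the paper's: reduce via scaling to the cylinder $C_B$, split on whether $B$ is a parallelogram, then read membership in $\mathcal{Z}_+$ and $\mathcal{Z}_-$ directly off Theorem~\ref{critical-pieces} (non-parallelogram case) and Theorem~\ref{Hajos} with an auxiliary block-diagonal linear reduction (parallelogram case). One tiny notational slip: for the set \eqref{critical-piece-12} you wrote the lattice vector as $U\cdot(1,0,0)^t$, but the lattice is $U\cdot\mathrm{diag}(M,1)\cdot\Z^3$, so the relevant vector is $U\,\mathrm{diag}(M,1)(1,0,0)^t=(m_{11},m_{21},0)^t$, which is still in the $(x_1,x_2)$-plane—so the conclusion stands.
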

\begin{proof} 
Let $B:=B_\nu\left(
{1}/{\Delta_\nu^{1/3}}\right)$, 
and recall that we have the equality $\Delta_\eta = \Delta_\nu$.
Note   that if $g \in \GL_3(\R)$ 
and if $D$ is any domain, we have
\begin{equation}\label{transf}
    \Delta(gD) = |\det(g)|\Delta(D) \text{ and } \mathcal{L}(gD) = g \mathcal{L}(D).
\end{equation}
Using the matrix
\begin{equation}\label{g}
    g = \left[ {\begin{array}{ccc}
   1 & 0 & 0 \\
   0 & 1 & 0 \\
   0 & 0 & \Delta_\nu^{1/3}
  \end{array} } \right],
\end{equation}
we see that 
\begin{equation*}
    gB_\eta\left(
    {1}/{\Delta_\eta^{1/3}}\right) = B  \times [-1,1].
\end{equation*}
Suppose that $B$ is not a parallelogram. Then by Theorem \ref{critical-pieces}, 
\begin{equation*}
\mathcal{L}_\eta =    
 \left\lbrace
        \left[ \begin{array}{c | c} 
      M &  \begin{array}{c} 0 \\ 0 \end{array} \\ 
      \hline
      \begin{array}{c c} * & * \end{array} & \Delta_\nu^{-1/3} 
     \end{array} \right] \Z^3 : M\Z^2 \in \mathcal{L}(B)
     \right\rbrace
\bigcup  
 \left\lbrace
        \left[ \begin{array}{c | c} 
      M &  \begin{array}{c} * \\ * \end{array} \\ 
      \hline
      \begin{array}{c c} 0 & 0 \end{array} & \Delta_\nu^{-1/3} 
     \end{array} \right] \Z^3 : M\Z^2 \in \mathcal{L}(B)
     \right\rbrace.
\end{equation*}
This finishes the proof in this case, since the sets in the right hand side of the above equation are contained in $\mathcal{Z}_+$  and $\mathcal{Z}_-$ respectively.

In the remaining parallelogram  case we have $B_\eta\left(
    {1}/{\Delta_\eta^{1/3}}\right) = gC$, where $g$ is a linear transformation of the form  
$ \left[ \begin{array}{c | c} 
      * &  \begin{array}{c} 0 \\ 0 \end{array} \\ 
      \hline
      \begin{array}{c c} 0 & 0 \end{array} & * 
     \end{array} \right] $,  and $C$ is a ball for the supremum norm in $\R^3$.  Theorem \ref{Hajos} implies that every lattice in $\mathcal{L}(C)$ contains at least one vector from the standard basis of $\R^3$; hence $\mathcal{L}(C)\subset \mathcal{Z}_+\cup\mathcal{Z}_-$. Since $g$ leaves both the $(x_1,x_2)$-plane and  the $x_3$-axis invariant, it also preserves the sets $\mathcal{Z}_+$  and $\mathcal{Z}_-$, which, in view of \eqref{transf}, finishes the proof of the proposition.
  \end{proof}

We are now ready for the
\begin{proof}[Proof of Theorem \ref{main-theorem}] 
Recall   that $\x\in\R^2$ is badly approximable if and only if the orbit $\{a_s\Lambda_\x: s>0\}$ is bounded (see \cite[Theorem 2.20]{d}).
 Take $\x\in\mathbf{BA}$ and let $\mathcal{K}\subset X_3$ be a compact set containing $\{a_s\Lambda_\x : s\ge 0\}$. In view of Corollary \ref{miss-locus}, we need to prove that the forward $a_s$-orbit of $\Lambda_\x$ eventually avoids an open neighborhood of $\mathcal{L}_\eta$. Suppose that it is not the case, that is, we have  $a_{s_k}\Lambda_\x\to\Lambda\in\mathcal{L}_\nu$ for some sequence $s_k\to\infty$. In view of the preceding proposition we are left to consider two cases.
\begin{itemize}
\item[\bf Case 1.] $\Lambda\in\mathcal Z_+$. Since the $x_3$-axis is a contracting eigenspace for $a_s$ for $s > 0$, it follows that the forward  $a_s$-trajectory of $\Lambda$ is divergent, which implies that
\begin{equation}\label{forward}
    \{s > 0 : a_s \Lambda \in \mathcal{K}\} \text{ is bounded.}
\end{equation}
By continuity of the action and in view of \eqref{forward}, there is a $k\in\N$ and an open neighborhood $ \mathcal{V}$ of $\Lambda$ such that $a_{s_k}\mathcal{V} \cap  \mathcal{K} =\varnothing$. 
This implies that $a_{s_k+s_m}\Lambda_\x \notin \mathcal{K}$ for large enough $m$, a contradiction.
\item[\bf Case 2.] $\Lambda\in\mathcal Z_-$. In this case, since  the $(x_1,x_2)$-plane is a contracting eigenspace for $a_s$ for $s < 0$,  the backward  $a_s$-trajectory of $\Lambda$ is divergent,  That is, \begin{equation}\label{backward}
    \{s < 0 : a_s \Lambda \in \mathcal{K}\} \text{ is bounded.}
\end{equation}
Again by continuity of the action and in view of \eqref{backward}, there is a neighborhood $ \mathcal{V}$ of $\Lambda$   and $s<0$ such that $a_s \mathcal{V} \cap  \mathcal{K} =\varnothing$.
This implies that there   exist infinitely many $k$ for which $a_{s+s_k}\Lambda_\x \notin  \mathcal{K} $.
The flow time $s+s_k$ will eventually be positive, contradicting the fact that the forward orbit of $\Lambda_\x$ is contained in $K$.
\end{itemize}
\end{proof}

\begin{proof}[Proof of Corollary \ref{cor-spectrum}]
{Suppose, to the contrary, that $c_\nu$ is an isolated point of $\D_\nu$. This means that there exists $c < c_\nu$ such that $ c_\nu(\x) < c$ for all $\x\in \mathbf{DI}_\nu$. Proposition \ref
{dynamical} then implies that there exists $r < r_\eta$, where $\eta$ is again defined via \equ{norm}, such that
\eq{containment}{\mathbf{DI}_\nu\subset \left\{\x\in\R^2: a_su_\x\cdot\Z^3 \notin  \mathcal{K}_\eta(r)\text{ for all sufficiently large }s\right\}.
}
The exceptional sets similar to that in the right hand side of \equ{containment} were recently investigated in \cite{KM}, where, in particular, the so-called Dimension Drop Conjecture was solved for the $a_s$-action on $X_3$. More precisely, 
since $\mathcal{K}_\eta(r)$ has non-empty interior, in view of \cite[Theorem~1.2]{KM}
the set in the right hand side of \equ{containment} has less than full \hd, which contradicts Theorem \ref{main-theorem}.}  \end{proof}

{We end the paper with a few remarks and questions.}

\begin{rem}{
\rm Let $B \subset \R^n$ be a bounded convex symmetric domain, and let $C_B\subset \R^{n+1}$ be the cylinder over it.
It would be natural to attempt to obtain a structure theorem for $\mathcal{L}(C_B)$ 
similar to Theorem \ref{critical-pieces}.
However, even the equality of critical determinants
%
 is not known in this generality.
The only higher-dimensional example known to the authors can be found in \cite{W}: it  establishes \equ{equaldets} for the special case of 
$B$ being a Euclidean ball in $\R^3$.} \end{rem}

\begin{rem}{\rm Corollary \ref{miss-locus} 
suggests that one can consider a \di\ condition on $\x\in\R^{2}$ equivalent to  the forward $a_s$-orbit of $\Lambda_\x$ eventually avoiding some open neighborhood of $\mathcal{L}_\eta$,
where $\eta$ is an arbitrary (not necessarily cylindrical) norm on $\R^3$. This variation on the theme of improving Dirichlet's theorem has been thoroughly explored in \cite{KR} in a much more general set-up of $m\times n$ matrices. In particular, the full Hausdorff dimension (and, even stronger, the hyperplane absolute winning property) of the sets of lattices avoiding the critical locus has been established in several special cases, such as: an arbitrary norm on $\R^2$ \cite[Theorem 1.3]{KR} and the Euclidean norm on $\R^{m+n}$ \cite[Theorem 3.7]{KR}. In both cases a connection with badly approximable matrices   is no longer clear, and the proof is based on results from \cite{AGK} of the first named author with An and Guan. Whether or not a similar full dimension or winning result holds for an arbitrary norm on $\R^{m+n}$  is an open question.
}\end{rem}

\end{document}